\newtheorem{theorem}{Theorem}
\newtheorem{corollary}{Corollary}
\newtheorem{lemma}{Lemma}
\newtheorem{definition}{Definition}
\newtheorem{remark}{Remark}
\newtheorem{example}{Example}
\title{\LARGE \bf
Partial-fraction Expansion of Lossless Negative Imaginary Property and A Generalized Lossless Negative Imaginary Lemma}
\author{Mei Liu and Gang Chen
\thanks{Mei Liu is with the Department of Mechanical Engineering,
The Hong Kong Polytechnic University, Hong Kong, China. Gang Chen is with the Department of Mechanical and Aerospace Engineering, University of California, Davis, CA. Email: {\tt\small Lmaymay@mail.ustc.edu.cn} (Mei Liu),
{\tt\small ggchen@ucdavis.edu} (Gang Chen). 
}}
\begin{document}

\maketitle
\thispagestyle{empty}
\pagestyle{empty}

\begin{abstract}
This paper studies a partial-fraction expansion for lossless negative imaginary systems and presents a generalized lossless negative imaginary lemma by allowing poles at zero. First, a necessary and sufficient condition for a system to be non-proper lossless negative imaginary is developed, and a minor partial-fraction expansion of lossless negative imaginary property is studied. Second, according to the minor decomposition properties, two different and new relationships between lossless positive real and lossless negative imaginary systems are established.
 Third, according to one of the relationships, a generalized lossless negative imaginary lemma in terms of a minimal state-space realization is derived by allowing poles at zero. Some important properties of lossless negative imaginary systems are also studied in this paper, and three numerical examples are provided to illustrate the developed theory.

\end{abstract}

\section{Introduction}

The concept of lossless is related to that of passivity \cite{anderson1973network}. An m-port network, assumed to be storing no energy at time $t$, is called lossless if it is passive and if, when a finite amount of energy is put into the elements, all the energy can be extracted again \cite{anderson1973network}. It is well known that systems which dissipate energy often result in positive real properties \cite{anderson1973network,brogliato2006dissipative}. The so-called lossless positive real systems are systems whose positive real transfer matrix $F(s)$
satisfies the condition: $F(j\omega)+F^{*}(j\omega)=0$ for all real $\omega$. That is, the negative of a lossless positive real transfer matrix is its own complex conjugate transpose \cite{newcomb1966linear}. More physical descriptions about the lossless positive real systems can be found in \cite{bitmead1977matrix,anderson1973network,newcomb1966linear}.

Since the concept of lossless positive real systems first appeared in \cite{newcomb1966linear}, the research of lossless positive real systems have attracted more attention among control theorist \cite{vaidyanathan1985discrete,kraus1988robust,bitmead1977matrix,buscarino2016invariance}. For example, article \cite{bitmead1977matrix} gave a matrix fraction description of lossless positive real property in terms of a Hankel matrix.
The invariance of characteristic values and $L_{\infty}$ norm under lossless positive real transformations were studied in \cite{buscarino2016invariance}.
The continuous-time and discrete-time lossless positive real lemma in terms of minimal state-space realization were developed in \cite{anderson1973network} and \cite{xiao1999generalizations,byrnes1994losslessness}, respectively. Also, it is noteworthy that lossless positive real transfer functions form a convex set, which showed an important role in the proof of Kharitonov's theorem, see \cite{kraus1988robust}.

One major limitation of (lossless) positive real system is that their relative degree must be zero or one \cite{brogliato2006dissipative}. Negative imaginary systems theory, which allowed the relative degree to be two, has appeared as a useful complement to passivity theory and positive real theory \cite{petersen2015physical,lanzon2008stability}. More research and applications on negative imaginary systems can be found in \cite{bhikkaji2012negative,cai2010stability,liu2015alpha,
bhowmick2017lti,wang2015robust,liu2017properties,mabrok2015generalized}. 
In this paper, we are interested in studying a special and important class of negative imaginary systems, that is, lossless negative imaginary systems.
Dynamical systems with lossless negative imaginary properties have much more applications in the control of undamped flexible structures and lossless electrical circuits, see \cite{anderson1973network,xiong2012lossless,newcomb1966linear}.
The definition of proper lossless negative imaginary systems was firstly proposed in \cite{xiong2012lossless} by restricting no poles at zero and infinity, and a minimal state-space characterization of such systems was also developed in \cite{xiong2012lossless}. Then, an algebraic approach to the realization of lossless negative imaginary systems was studied in \cite{rao2012algebraic}. Subsequently, article \cite{liu2016non} extended the definition of lossless negative imaginary systems to non-proper case by allowing poles at zero and infinity.

In this paper, we will further present a partial-fraction expansion for lossless negative imaginary systems. That is, the lossless negative imaginary systems can be decomposed as a sum of several lossless negative imaginary systems. Based on this minor decomposition properties, two new relationships between lossless positive real and lossless negative imaginary transfer matrices are established from proper and non-proper case, and a generalized lossless negative imaginary lemma in terms of minimal state-space condition is derived by allowing poles at zero.
The extended results of the paper show a nice parrel to the better understood results on non-proper lossless positive real systems.

The rest of the paper is organized as follows: Section \ref{sec:pre} reviews the definition of lossless positive real and lossless negative imaginary systems. Some useful preliminary results and the minor decomposition theory are also introduced in this section. Section \ref{sec:relation} establishes two relationships between lossless negative imaginary and lossless positive real transfer matrices. Based on one of the relationships, a generalized lossless negative imaginary lemma is developed in Section \ref{sec:lemma}.
Section \ref{sec:conclusions} concludes the paper.

%
$\textit{Notation:}$ $\mathbb{R}^{m\times n}$ and $\mathcal{R}^{m\times n}$ denote the sets of $m\times n$ real matrices and real-rational proper transfer matrices, respectively.
$A^{T}$, $\bar{A}$ and $A^{*}$ denote the transpose, the complex conjugate and the complex conjugate transpose of a complex matrix $A$, respectively.
$A>(\geq)0$ denotes a symmetric positive (semi-) definite matrix.

%
%

\section{Preliminary results}
\label{sec:pre}
The concept of lossless positive real and lossless negative imaginary systems are introduced in this section. Also, some useful properties for lossless positive real and lossless negative imaginary systems, which will be used to develop the main results of this paper, are stated in this section.
\subsection{Lossless positive real transfer function matrices}
This subsection introduces the concepts of positive real and
lossless positive real transfer matrices, and derives a generalized lossless positive real lemma by removing the observability assumption.
\begin{definition}
\label{def:1}
\cite{anderson1973network} A square transfer function matrix $F(s)$ is said to be positive real if
\begin{enumerate}
\item All elements of $F(s)$ are analytic in $\mathrm{Re}[s]>0$; \label{def:1-1}
\item $F(s)$ is real for real positive $s$;\label{def:1-2}
\item $F^{*}(s)+F(s)\geq0$ for $\mathrm{Re}[s]>0$.\label{def:1-3}
\end{enumerate}
\end{definition}

\begin{definition}\cite{anderson1973network}
\label{def:2}
A square real-rational transfer function matrix $F(s)$ is said to be lossless positive real if
\begin{enumerate}
 \item $F(s)$ is positive real;\label{def:2-1}
 \item $F(j\omega)+F^{*}(j\omega)=0$ for all $\omega>0$ except values of $\omega$ where $j\omega$ is a pole of $F(s)$.\label{def:2-2}
\end{enumerate}
\end{definition}

The following lemma extends the positive real lemma in \cite{anderson1973network} by relaxing the observability requirement of $(A,C)$ and the non-singularity requirement of $P$.
\begin{lemma}\cite{anderson1968algebraic}
\label{lem:1}
Let $(A,B,C,D)$ be a state-space realization of a real-rational proper transfer function matrix $F(s)\in\mathcal{R}^{m\times m}$, where $(A,B)$ is completely controllable, $(A,C)$ is not necessarily completely observable, $A\in\mathbb{R}^{n\times n}$, $B\in\mathbb{R}^{n\times m}$, $C\in\mathbb{R}^{m\times n}$, $D\in\mathbb{R}^{m\times m}$, and $m\leq n$. Then, $F(s)$ is positive real if and only if there exist real matrices $P=P^{T}\geq0$, $P\in\mathbb{R}^{n\times n}$, $L$ and $W$ such that
\begin{align}
\label{eq:4}
\begin{aligned}
PA+A^TP&=-LL^{T}\\
PB&=C^T-L^{T}W\\
D+D^{T}&=W^TW.
\end{aligned}
\end{align}
\end{lemma}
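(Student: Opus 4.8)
The plan is to establish the two implications separately, with the necessity direction carrying the real difficulty. Both directions are organized around a para-Hermitian factorization of the $\mathrm{Hermitian}$ part of $F$. Writing $\Phi(s)=(sI-A)^{-1}$ and substituting \eqref{eq:4} into $F(s)+F^{T}(-s)$, one completes the square: the constant term $W$ and the matrix $L$ assemble into a real-rational spectral factor
\begin{equation}
\Theta(s):=W+L(sI-A)^{-1}B, \qquad F(s)+F^{T}(-s)=\Theta^{T}(-s)\,\Theta(s).
\end{equation}
The verification uses the identity $-(PA+A^{T}P)=(-sI-A^{T})P+P(sI-A)$ to rewrite $B^{T}\Phi^{T}(-s)\bigl(-(PA+A^{T}P)\bigr)\Phi(s)B$ in terms of $PB$, after which the second and third relations of \eqref{eq:4} eliminate $PB$ and $D+D^{T}$. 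This identity is the engine for everything below.

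For sufficiency, suppose $P=P^{T}\geq0$, $L$, $W$ solve \eqref{eq:4}. Condition~\ref{def:1-2} of Definition~\ref{def:1} is immediate since all data are real. For analyticity (condition~\ref{def:1-1}), let $\lambda$ be an eigenvalue of $A$ with eigenvector $v$; the first relation of \eqref{eq:4} gives $2\,\mathrm{Re}(\lambda)\,v^{*}Pv=v^{*}(PA+A^{T}P)v\leq0$, so $\mathrm{Re}(\lambda)>0$ forces $v^{*}Pv=0$, hence $Pv=0$ and, from the same relation, that the factor $L$ annihilates $v$. Then the second relation of \eqref{eq:4} yields $Cv=B^{T}Pv+W^{T}Lv=0$, so every open-right-half-plane mode is unobservable and contributes no pole of $F(s)$; thus $F$ is analytic in $\mathrm{Re}[s]>0$. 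Finally, evaluating the factorization on $s=j\omega$ gives $F(j\omega)+F^{*}(j\omega)=\Theta^{*}(j\omega)\Theta(j\omega)\geq0$, and the standard harmonic/maximum-principle extension of this boundary inequality into the region of analyticity delivers condition~\ref{def:1-3}. Hence $F$ is positive real.

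For necessity the obstacle is to produce $P\geq0$ \emph{without} assuming $(A,C)$ observable, so the classical positive real lemma does not apply directly. My approach is a Kalman observability decomposition: change coordinates so that
\begin{equation}
A=\begin{bmatrix}A_{11}&0\\A_{21}&A_{22}\end{bmatrix},\quad B=\begin{bmatrix}B_{1}\\B_{2}\end{bmatrix},\quad C=\begin{bmatrix}C_{1}&0\end{bmatrix},
\end{equation}
with $(A_{11},C_{1})$ observable. Since the unobservable block is invisible to the transfer matrix, $F(s)=D+C_{1}(sI-A_{11})^{-1}B_{1}$; moreover complete controllability passes to the observable quotient, so $(A_{11},B_{1})$ is controllable and this reduced realization is minimal. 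Applying the classical positive real lemma (the observable case) to this minimal realization of the positive real $F$ produces $P_{1}=P_{1}^{T}>0$, $L_{1}$, $W$ solving the three equations for $(A_{11},B_{1},C_{1},D)$.

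It then remains to lift the reduced solution. Setting $P=\mathrm{diag}(P_{1},0)$, $L=[\,L_{1}\ \ 0\,]$ and keeping $W$, a block computation shows that the first two relations of \eqref{eq:4} hold (the off-diagonal and $(2,2)$ blocks vanish because $P$ has a zero block) and that $D+D^{T}=W^{T}W$ is inherited unchanged; transforming back by the coordinate change is a congruence and preserves $P=P^{T}\geq0$. The zero block is precisely what degrades $P_{1}>0$ to $P\geq0$, which is the relaxation claimed in the statement. The main difficulty is concentrated in this reduction: one must confirm that positive realness is genuinely inherited by the minimal part (clear, since the transfer matrix is unchanged) and that controllability survives passage to the observable quotient, and one must accept that no bound stronger than $P\geq0$ is available once unobservable modes are present. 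The remainder is the block algebra above together with the factorization identity.
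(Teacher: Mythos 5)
The paper itself offers no proof of Lemma~\ref{lem:1} (it is quoted verbatim from \cite{anderson1968algebraic}), so your proposal has to stand on its own. Your necessity argument does: the observability decomposition, the fact that complete controllability passes to the observable quotient (so $(A_{11},B_{1},C_{1},D)$ is a \emph{minimal} realization of the same $F(s)$), the appeal to the classical minimal-realization positive real lemma for $P_{1}>0$, $L_{1}$, $W$, and the lift $P=\mathrm{diag}(P_{1},0)$ followed by a congruence back to the original coordinates are all correct, and this is exactly the mechanism by which the usual $P>0$ degrades to $P\geq0$ when unobservable modes are admitted. The block algebra checks out.

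The genuine gap is in your sufficiency direction, at the step ``the standard harmonic/maximum-principle extension of this boundary inequality into the region of analyticity delivers condition~3.'' That implication is false as stated: analyticity in $\mathrm{Re}[s]>0$ together with $F(j\omega)+F^{*}(j\omega)\geq0$ at all non-pole boundary points does \emph{not} imply positive realness. Counterexample: $F(s)=-1/s$ is analytic in $\mathrm{Re}[s]>0$ and satisfies $F(j\omega)+F^{*}(j\omega)=0$ for all $\omega\neq0$, yet $F(1)+F^{*}(1)=-2<0$. The maximum/minimum principle cannot be pushed across a boundary pole; one would additionally need the simple-pole/nonnegative-residue conditions on the imaginary axis, which your argument never establishes. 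The repair is already latent in your own setup: keep the term that cancels in the para-Hermitian identity. Using $(\bar{s}I-A^{T})P+P(sI-A)=2\,\mathrm{Re}(s)\,P-(PA+A^{T}P)$, the same square-completion gives, for $\mathrm{Re}[s]>0$ with $s$ not an eigenvalue of $A$,
\begin{equation*}
F(s)+F(s)^{*}=\Theta(s)^{*}\Theta(s)+2\,\mathrm{Re}(s)\,\bigl[(sI-A)^{-1}B\bigr]^{*}P\bigl[(sI-A)^{-1}B\bigr]\geq0,
\end{equation*}
which yields condition~\ref{def:1-3} of Definition~\ref{def:1} directly in the open right half-plane (eigenvalues of $A$ that are not poles of $F$ follow by continuity); the $P$-term vanishes identically only in $F(s)+F^{T}(-s)=\Theta^{T}(-s)\Theta(s)$, which is why the boundary identity alone is too weak. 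A second, smaller omission: your analyticity argument annihilates $C$ only on eigenvectors of $A$, and that alone does not make a right-half-plane mode unobservable --- take $A=\bigl(\begin{smallmatrix}\lambda&1\\0&\lambda\end{smallmatrix}\bigr)$, $C=(0\;\;1)$, $B=\bigl(\begin{smallmatrix}0\\1\end{smallmatrix}\bigr)$, where $C$ kills the eigenvector yet $C(sI-A)^{-1}B=1/(s-\lambda)$. You must propagate the argument up Jordan chains: once $Pv_{1}=0$, one gets $v_{2}^{*}(PA+A^{T}P)v_{2}=2\,\mathrm{Re}(\lambda)\,v_{2}^{*}Pv_{2}=-\|L^{T}v_{2}\|^{2}$, hence $Pv_{2}=0$, $L^{T}v_{2}=0$, $Cv_{2}=0$, and so on, so the whole generalized eigenspace is unobservable. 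The induction closes, but it is not in your text.
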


Similarly, we can extend the lossless positive real lemma in \cite[page 229]{anderson1973network} by relaxing the observability requirement of $(A,C)$ and the non-singularity requirement of $P$. Lemma \ref{lem:2} is useful in the proof of lossless negative imaginary lemma in Section \ref{sec:lemma}.

\begin{lemma}
\label{lem:2}
Let $(A,B,C,D)$ be a state-space realization of a square real-rational proper transfer function matrix $F(s)\in\mathcal{R}^{m\times m}$, where $(A,B)$ is completely controllable, $(A,C)$ is not necessarily completely observable, $A\in\mathbb{R}^{n\times n}$, $B\in\mathbb{R}^{n\times m}$, $C\in\mathbb{R}^{m\times n}$, $D\in\mathbb{R}^{m\times m}$, and $m\leq n$. Then, $F(s)$ is lossless positive real if and only if there exist real matrices $P=P^{T}\geq0$, $P\in\mathbb{R}^{n\times n}$, $L$ and $W$ such that
\begin{align}
\label{eq:5}
\begin{aligned}
PA+A^TP&=0\\
PB-C^T&=0\\
D+D^{T}&=0.
\end{aligned}
\end{align}
\end{lemma}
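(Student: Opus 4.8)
The plan is to leverage Lemma 1 (the generalized positive real lemma) and specialize it to the lossless case, since lossless positive real is positive real plus the extra constraint $F(j\omega)+F^*(j\omega)=0$ on the imaginary axis. I would prove necessity and sufficiency separately.

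First, for sufficiency, suppose matrices $P=P^T\geq 0$, $L$, $W$ satisfy \eqref{eq:5}. I would observe that \eqref{eq:5} is exactly \eqref{eq:4} with the special choices $L=0$ and $W=0$, so by Lemma 1, $F(s)$ is positive real. To establish the lossless condition, I would compute $F(j\omega)+F^*(j\omega)$ directly from the realization. Writing $F(s)=D+C(sI-A)^{-1}B$, I would form $F(j\omega)+F(j\omega)^*$ and substitute $C^T=PB$, $D+D^T=0$, and $PA+A^TP=0$. The key algebraic identity is that $(j\omega I-A)^{-1}$ and $(-j\omega I-A^T)^{-1}$ can be related through $P$ using $PA+A^TP=0$; expanding $C(j\omega I-A)^{-1}B + B^T(-j\omega I-A^T)^{-1}C^T$ and repeatedly applying the three equations should telescope everything to zero. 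I expect this substitution step to be the main computational obstacle, since it requires carefully manipulating the resolvent terms so that $PA+A^TP=0$ causes cancellation.

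For necessity, I would assume $F(s)$ is lossless positive real. Since lossless positive real implies positive real, Lemma 1 guarantees the existence of $P=P^T\geq 0$, $L$, $W$ satisfying \eqref{eq:4}. The task is then to show the lossless condition $F(j\omega)+F^*(j\omega)=0$ forces $L=0$ and $W=0$ (so that \eqref{eq:4} collapses to \eqref{eq:5}). The natural approach is to derive a spectral/energy identity: from \eqref{eq:4}, the quantity $F(j\omega)+F^*(j\omega)$ should equal $W^TW$ plus a Hermitian-square term built from $L$ and the resolvent, something of the form $\bigl(W+L^T(j\omega I-A)^{-1}B\bigr)^*\bigl(W+L^T(j\omega I-A)^{-1}B\bigr)$. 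Setting this expression to zero for all admissible $\omega$ would force $W+L^T(j\omega I-A)^{-1}B=0$ identically in $\omega$. Using complete controllability of $(A,B)$, this identity should then force $L^T=0$ and $W=0$.

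The hard part will be justifying that the vanishing of $W+L^T(j\omega I-A)^{-1}B$ for all $\omega$ (outside the poles) implies $L=0$ and $W=0$. Here I would exploit controllability: letting $\omega\to\infty$ gives $W=0$, and then $L^T(j\omega I-A)^{-1}B\equiv 0$ together with controllability of $(A,B)$ (which makes the controllability matrix full rank) forces $L^T=0$. Once $L=0$ and $W=0$, the three equations of \eqref{eq:4} reduce precisely to those of \eqref{eq:5}, completing the proof. I would note that the relaxation of the observability assumption on $(A,C)$ and the non-singularity requirement on $P$ is inherited directly from Lemma 1, so no extra work is needed there beyond citing it.
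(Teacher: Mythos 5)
Your proposal is correct and follows essentially the same route as the paper: both directions rest on Lemma \ref{lem:1} together with the spectral factorization $F(j\omega)+F^{*}(j\omega)=\bigl[W+L^{T}(j\omega I-A)^{-1}B\bigr]^{*}\bigl[W+L^{T}(j\omega I-A)^{-1}B\bigr]$, with the lossless condition forcing $W+L^{T}(j\omega I-A)^{-1}B\equiv0$ and then $W=0$, $L=0$ via controllability (exactly the step the paper delegates to Anderson--Vongpanitlerd), while your sufficiency computation is the same factorization specialized to $L=0$, $W=0$. No gaps worth noting.
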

\begin{proof}
(Necessity) Suppose $F(s)$ is lossless positive real. It is also positive real. According to Lemma \ref{lem:1}, there exist real matrices $P=P^{T}\geq0$, $L$ and $W$ satisfying \eqref{eq:4}. Then, from the $L$ and $W$, we can construct that
\begin{align*}
F(s)+F^{*}(s)=&[W^{T}+B^{T}(-sI-A^{T})^{-1}L][W+L^{T}(sI-A)^{-1}B]\\
&+B^{T}(s^{*}I-A^{T})^{-1}P(sI-A)^{-1}B(s+s^{*}),
\end{align*}
and hence,
\begin{align}
\label{eq:6}
\begin{aligned}
F(s)&+F^{T}(-s)\\
&=[W^{T}+B^{T}(-sI-A^{T})^{-1}L][W+L^{T}(sI-A)^{-1}B]\\
&\triangleq \Delta^{T}(-s)\Delta(s).
\end{aligned}
\end{align}
Let $s=j\omega$. Condition 2 of Definition \ref{def:2} implies that
\begin{align*}
F(j\omega)&+F^{T}(-j\omega)= \Delta^{*}(j\omega)\Delta(j\omega)=0.
\end{align*}
Hence, $\Delta(j\omega)=W+L^{T}(j\omega I-A)^{-1}B=0$ for all real $\omega$. The rest proof is the same as the proof of lossless positive real lemma in \cite[Page 222]{anderson1973network}. Note that only the controllability of $(A,B)$ is used to prove $L=0$.

(Sufficiency) Suppose \eqref{eq:5} holds. Because \eqref{eq:5} is the same as \eqref{eq:4} with $L=0$ and $W=0$, it follows from Lemma \ref{lem:1} that $F(s)$ is at least positive real. Applying the spectral factorization in \eqref{eq:6}, we have $F(j\omega)+F^{*}(j\omega)=0$ for all real $\omega$. According to the Definition \ref{def:2}, $F(s)$ is lossless positive real.
\end{proof}

\begin{remark}
The proof of Lemma \ref{lem:2} is mainly motivated by Lemma \ref{lem:1} and the proof of lossless positive real lemma in
\cite[page 222]{anderson1973network}.
The details on the spectral factorization for positive real transfer matrix can be found in \cite[page 219]{anderson1973network}.
The main difference between the proof of Lemma \ref{lem:2} and the proof of lossless positive real lemma in
\cite[page 222]{anderson1973network} is that the positive real lemma used in \cite[page 222]{anderson1973network} is replaced by Lemma \ref{lem:1} in here.
\end{remark}
\subsection{Lossless negative imaginary transfer function matrices}
This subsection introduces the concepts of negative imaginary and
lossless negative imaginary transfer matrices, and studies some important properties of lossless negative imaginary systems.
\begin{definition}\cite{liu2016non}
\label{def:3}
A square real-rational transfer function matrix $G(s)$ is said to be negative imaginary if
\begin{enumerate}
\item $G(s)$ has no poles in $\mathrm{Re}[s]>0$;\label{def:3-1}
\item $j[G(j\omega)-G^{*}(j\omega)]\geq0$ for all $\omega>0$ except values of $\omega$ where $j\omega$ is a pole of $G(s)$;\label{def:3-2}
\item If $s=0$ is a pole of $G(s)$, then it is at most a double pole, $\lim_{s\rightarrow 0}s^{2}G(s)$ is positive semidefinite Hermitian, and $\lim_{s\rightarrow 0}s^{m}G(s)=0$ for all $m\geq3$; \label{def:3-5}
\item If $s=j\omega_{0}$ with $\omega_{0}>0$ is a pole of $G(s)$, $\omega_{0}$ is finite, it is at most a simple pole and the residue matrix $K=\lim_{s\rightarrow j\omega_{0}}(s-j\omega_{0})jG(s)$ is positive semidefinite Hermitian;\label{def:3-3}
\item If $s=j\infty$ is a pole of $G(s)$, then it is at most a double pole, $\lim_{\omega \rightarrow \infty}\frac{G(j\omega)}{(j\omega)^{2}}$ is negative semidefinite Hermitian, and $\lim_{\omega \rightarrow \infty}\frac{G(j\omega)}{(j\omega)^{m}}=0$ for all $m\geq3$.\label{def:3-4}
\end{enumerate}
\end{definition}

\begin{definition}\cite{liu2016non}
\label{def:4}
A square real-rational transfer function matrix $G(s)$ is said to be lossless negative imaginary if
\begin{enumerate}
 \item $G(s)$ is negative imaginary;\label{def:1-1}
 \item $j[G(j\omega)-G^{*}(j\omega)]=0$ for all $\omega>0$ except values of $\omega$ where $j\omega$ is a pole of $G(s)$.\label{def:1-2}
\end{enumerate}
\end{definition}

The following lemma provides a necessary and sufficient condition for a system to be non-proper lossless negative imaginary.
\begin{lemma}
\label{lem:3}
A square real-rational transfer function matrix $G(s)$ is lossless negative imaginary if and only if
\begin{enumerate}
  \item All poles of elements of $G(s)$ are purely imaginary; \label{lem:1-1}
\item If $s=0$ is a pole of $G(s)$, it is at most a double pole,  $\lim_{s\rightarrow 0}s^{2}G(s)$ is positive semidefinite Hermitian, and $\lim_{s\rightarrow 0}s^{m}G(s)=0$ for all $m\geq3$; \label{lem:1-2}
\item If $s=j\omega_{0}$ with $\omega_{0}>0$ is a pole of $G(s)$, $\omega_{0}$ is finite, it is at most a simple pole and the residue matrix $K=\lim_{s\rightarrow j\omega_{0}}(s-j\omega_{0})jG(s)$ is positive semidefinite Hermitian;\label{lem:1-3}
\item If $s=j\infty$ is a pole of $G(s)$, it is at most a double pole, $\lim_{\omega \rightarrow \infty}\frac{G(j\omega)}{(j\omega)^{2}}$ is negative semidefinite Hermitian, and $\lim_{\omega \rightarrow \infty}\frac{G(j\omega)}{(j\omega)^{m}}=0$ for all $m\geq3$;\label{lem:1-4}
  \item $G(s)=G^{T}(-s)$ for all $s$ such that $s$ is not a pole of any element of $G(s)$.\label{lem:1-5}
\end{enumerate}
\end{lemma}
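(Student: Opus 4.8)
The plan is to reduce both directions to a single algebraic identity on the imaginary axis. The key observation is that conditions \ref{lem:1-2}--\ref{lem:1-4} of the lemma are verbatim the pole conditions \ref{def:3-5}, \ref{def:3-3} and \ref{def:3-4} of Definition \ref{def:3}, so they transfer between the two formulations with no work; the genuine content is therefore the equivalence between the losslessness and right-half-plane requirements of Definitions \ref{def:3}--\ref{def:4} and conditions \ref{lem:1-1} and \ref{lem:1-5}. The one fact I would use throughout is that, since $G(s)$ is real-rational, each entry satisfies $\overline{G(j\omega)}=G(-j\omega)$, whence
\begin{align}
\label{eq:plan1}
G^{*}(j\omega)=\overline{G(j\omega)}^{T}=G^{T}(-j\omega)
\end{align}
for every real $\omega$ that is not a pole.

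To prove necessity, I would assume $G(s)$ is lossless negative imaginary. Conditions \ref{lem:1-2}--\ref{lem:1-4} then hold because $G(s)$ is in particular negative imaginary. For condition \ref{lem:1-5}, the losslessness equality $j[G(j\omega)-G^{*}(j\omega)]=0$ combined with \eqref{eq:plan1} gives $G(j\omega)=G^{T}(-j\omega)$ for all $\omega>0$ off the poles; since $G(s)-G^{T}(-s)$ is a rational matrix vanishing at infinitely many points, it vanishes identically, yielding $G(s)=G^{T}(-s)$. Condition \ref{lem:1-1} would then follow by combining this symmetry with the absence of poles in $\mathrm{Re}[s]>0$: the relation $G(s)=G^{T}(-s)$ shows $s_{0}$ is a pole if and only if $-s_{0}$ is, so the pole set is symmetric about the origin, and a pole with nonzero real part would force a companion pole in the open right half-plane, a contradiction; hence every pole is purely imaginary.

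For sufficiency, I would assume conditions \ref{lem:1-1}--\ref{lem:1-5} and verify each clause of Definitions \ref{def:3}--\ref{def:4}. Condition \ref{lem:1-1} immediately gives the no-right-half-plane-pole requirement. Evaluating condition \ref{lem:1-5} on the imaginary axis and invoking \eqref{eq:plan1} gives $G(j\omega)=G^{T}(-j\omega)=G^{*}(j\omega)$, so $j[G(j\omega)-G^{*}(j\omega)]=0$; this simultaneously discharges the negative imaginary inequality $j[G(j\omega)-G^{*}(j\omega)]\geq0$ and the losslessness equality of Definition \ref{def:4}. The remaining pole conditions of negative imaginariness are precisely conditions \ref{lem:1-2}--\ref{lem:1-4}. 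Collecting these, $G(s)$ is negative imaginary and lossless, as required.

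I expect the main obstacle to be the rigorous passage from the imaginary-axis identity to the global identity $G(s)=G^{T}(-s)$, together with the accompanying pole-symmetry argument. Both steps rely essentially on real-rationality: I would need to confirm that agreement on $\omega>0$ with finitely many poles removed still provides infinitely many coincidence points so that the vanishing-rational-function argument applies, and that the reflection $s_{0}\mapsto-s_{0}$ genuinely permutes the pole set. Everything else is a direct matching of the two lists of conditions.
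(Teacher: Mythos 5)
Your proposal is correct and follows essentially the same route as the paper: necessity by upgrading the imaginary-axis losslessness identity to the global symmetry $G(s)=G^{T}(-s)$ and then deducing pole symmetry to force all poles onto the imaginary axis, and sufficiency by direct verification of Definitions \ref{def:3}--\ref{def:4}. The only difference is cosmetic: where you justify the extension off the axis by noting that a rational matrix vanishing at infinitely many points vanishes identically, the paper invokes the maximum modulus theorem for the analytic function $j[G(s)-G^{T}(-s)]$ --- your justification is equally valid and arguably more elementary for rational functions.
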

\begin{proof}
(Necessity) Suppose $G(s)$ is lossless negative imaginary. Condition \ref{def:1-2} of Definition \ref{def:4} implies that $j[G(j\omega)-G^{*}(j\omega)]=0$ for all $\omega>0$ except values of $\omega$ where $j\omega$ is a pole of $G(s)$. Then, we have $\overline{j[G(j\omega)-G^{*}(j\omega)]}=0$ for all $\omega>0$ with $j\omega$ not a pole of $G(s)$, that is, $j[G(j\omega)-G^{*}(j\omega)]=0$ for all $\omega<0$ with $j\omega$ not a pole of $G(s)$. According to the continuity of $G(s)$, it follows that $j[G(0)-G^{*}(0)]=0$. Hence, we have
\begin{equation*}
j[G(s)-G^{T}(-s)]=0,
\end{equation*}
for all $s=j\omega$, where $j\omega$ is not a pole of $G(s)$. Because
$j[G(s)-G^{T}(-s)]$ is an analytic function of $s$, it
follows from maximum modulus theorem
(\cite[Theorem A4-3]{newcomb1966linear}) that $j[G(s)-G^{T}(-s)]=0$ holds for all $s$ such that $s$ is not a pole of $G(s)$, and hence $G(s)=G^{T}(-s)$. Condition \ref{lem:1-5} holds.

Suppose $s_0$ is a pole of $G(s)$. It follows from Condition \ref{lem:1-5} that $-s_{0}$ is also a pole of $G(s)$. According to Definition \ref{def:3}, we know that $G(s)$ has no poles in $\mathrm{Re}[s]>0$. If $\mathrm{Re}[s_{0}]<0$, then $\mathrm{Re}[-s_{0}]>0$, there exists contradiction. So, the only case is that all poles of elements of $G(z)$ lie on the imaginary axis. Condition \ref{lem:1-1} holds.
Moreover, conditions 3-5 of Definition \ref{def:3} imply that conditions 2-4 hold.

(Sufficiency) Suppose conditions 1-5 hold. Conditions 1-4 imply Condition 1 and conditions 3-5 of Definition \ref{def:3} hold. Then, Condition 5 implies $G(s)=G^{T}(-s)$, which implies that $j[G(j\omega)-G^{*}(j\omega)]=0$ for all $\omega>0$ such that $j\omega$ is not a pole of any element of $G(s)$. It follows from Definitions \ref{def:3} and \ref{def:4} that $G(s)$ is lossless negative imaginary.
\end{proof}

\begin{remark}
Lemma \ref{lem:3} can be considered as a generalization of Lemma 2 in \cite{xiong2012lossless} by allowing poles at zero and infinity.
\end{remark}

The following lemma characterizes the properties of sum of non-proper lossless negative imaginary transfer matrices.
\begin{lemma}
Given two square real-rational lossless negative imaginary transfer function matrices $G_{1}(s)$ and $G_{2}(s)$, and a negative imaginary transfer function matrix $G_{3}(s)$. Then,
\begin{enumerate}
  \item $G_{1}(s)+G_{2}(s)$ is lossless negative imaginary;
   \item $G_{1}(s)+G_{3}(s)$ is negative imaginary;
\end{enumerate}
\end{lemma}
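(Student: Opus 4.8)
The plan is to prove part~(1) by verifying the five conditions of Lemma~\ref{lem:3} for the sum $G_{1}(s)+G_{2}(s)$, and to prove part~(2) directly from Definition~\ref{def:3} for $G_{1}(s)+G_{3}(s)$. In both cases the engine is the same pair of elementary facts. First, the poles of a sum of rational matrices are contained in the union of the poles of the summands, since after clearing denominators and cancelling one has $G_{1}+G_{2}=(N_{1}D_{2}+N_{2}D_{1})/(D_{1}D_{2})$; hence every pole of the sum is purely imaginary and no pole in $\mathrm{Re}[s]>0$ is created. Second, the cones of Hermitian positive semidefinite and negative semidefinite matrices are each closed under addition, so every sign constraint on a residue or on a leading coefficient at $s=0$ and $s=j\infty$ is inherited by the sum.

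For part~(1), Condition~\ref{lem:1-5} is immediate: since $G_{i}(s)=G_{i}^{T}(-s)$ for $i=1,2$ wherever defined, adding gives $(G_{1}+G_{2})(s)=(G_{1}+G_{2})^{T}(-s)$. Condition~\ref{lem:1-1} follows from the pole-union remark. For Conditions~\ref{lem:1-2}--\ref{lem:1-4} I would observe that any pole of the sum at $s=0$, $s=j\omega_{0}$, or $s=j\infty$ is a pole of at least one summand at that point, so its order is bounded by the larger of the two orders and stays within the allowed bound (double, simple, double, respectively); moreover the relevant limit or residue of the sum is the sum of those of $G_{1}$ and $G_{2}$, because $\lim_{s\to 0}s^{2}(G_{1}+G_{2})=\lim_{s\to 0}s^{2}G_{1}+\lim_{s\to 0}s^{2}G_{2}$, the residue $\lim_{s\to j\omega_{0}}(s-j\omega_{0})j(G_{1}+G_{2})$ equals $K_{1}+K_{2}$, and likewise at infinity. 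Closure of the semidefinite Hermitian cones under addition then supplies the required sign, and the higher-order limits for $m\ge 3$ vanish term by term. Thus all five conditions of Lemma~\ref{lem:3} hold and $G_{1}+G_{2}$ is lossless negative imaginary.

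For part~(2), I would verify Conditions~\ref{def:3-1}--\ref{def:3-4} of Definition~\ref{def:3} for $G_{1}+G_{3}$. Condition~\ref{def:3-1} holds because neither $G_{1}$ nor $G_{3}$ has a pole in $\mathrm{Re}[s]>0$. For Condition~\ref{def:3-2}, the lossless hypothesis on $G_{1}$ gives $j[G_{1}(j\omega)-G_{1}^{*}(j\omega)]=0$, while $G_{3}$ negative imaginary gives $j[G_{3}(j\omega)-G_{3}^{*}(j\omega)]\ge 0$; adding yields $j[(G_{1}+G_{3})(j\omega)-(G_{1}+G_{3})^{*}(j\omega)]\ge 0$ wherever both terms are finite. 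Conditions~\ref{def:3-5}, \ref{def:3-3}, and \ref{def:3-4} on the poles at $0$, $j\omega_{0}$, and infinity are handled exactly as in part~(1), using additivity of the residues and leading coefficients together with closure of the semidefinite cones.

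The step I expect to require the most care is the treatment of poles common to both summands, and specifically the possibility that a genuine imaginary-axis pole of one summand is cancelled in the sum; this bites in part~(2), where Condition~\ref{def:3-2} must hold at every $\omega>0$ for which $j\omega$ is not a pole of $G_{1}+G_{3}$, even though the individual terms may blow up there, so the two inequalities cannot simply be added. The resolution is that cancellation of a simple pole at $j\omega_{0}>0$ would force the residues to satisfy $K_{1}+K_{3}=0$ with both $K_{1},K_{3}$ Hermitian positive semidefinite, whence $K_{1}=K_{3}=0$, contradicting that $j\omega_{0}$ is an actual pole; an analogous argument on the sign of $\lim_{s\to 0}s^{2}G$ and of $\lim_{\omega\to\infty}G(j\omega)/(j\omega)^{2}$ rules out cancellation of the leading singular part at $0$ and at $\infty$. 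Consequently, at every $\omega>0$ that is not a pole of the sum both summands are finite and the additive argument applies. Note that part~(1) avoids this subtlety entirely, because Lemma~\ref{lem:3} encodes the lossless property through the global analytic identity $G(s)=G^{T}(-s)$ rather than through a pointwise inequality, so cancellation there merely removes a pole and leaves the remaining conditions intact.
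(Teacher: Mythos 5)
Your proof is correct, but it is worth noting that the paper offers no argument at all for this lemma: its entire proof reads ``The proof is trivial according to the definitions of negative imaginary and lossless negative imaginary transfer function matrices.'' Your write-up supplies the content that this dismissal hides, and it does so along a slightly different route than the paper's gesture suggests: for part (1) you verify the five conditions of Lemma \ref{lem:3} rather than Definitions \ref{def:3} and \ref{def:4} directly, which is a genuinely good move, since the global analytic identity $G(s)=G^{T}(-s)$ is additive and extends across any cancelled poles by analytic continuation, so the pointwise lossless condition never has to be checked at delicate points. More importantly, you identify the one step that makes the lemma not quite ``trivial'': in part (2) the inequality of Condition 2 of Definition \ref{def:3} must hold at every $\omega>0$ that is not a pole of $G_{1}+G_{3}$, and naive addition of the two frequency-domain conditions breaks down if a common imaginary-axis pole cancels in the sum. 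Your resolution---that cancellation of a simple pole at $j\omega_{0}>0$ would force $K_{1}+K_{3}=0$ with both residue matrices positive semidefinite Hermitian, hence $K_{1}=K_{3}=0$, contradicting the existence of the pole---is exactly the right argument and is sound (a zero residue together with the at-most-simple-pole property means the point is not a pole at all). The only inessential overstatement is your remark about ruling out cancellation ``at $0$ and at $\infty$'': the simple-pole parts there (the $C_{1}$ and $A_{1}$ terms, which are only skew-Hermitian or unconstrained) can in fact cancel, but this is harmless because such points are not finite $\omega>0$ test points for Condition 2, and the remaining conditions of Definition \ref{def:3} are either vacuous or additive there. In short: your proof is complete and more informative than the paper's, which asserts triviality without addressing the cancellation issue you correctly isolated.
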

\begin{proof}
The proof is trivial according to the definitions of negative imaginary and lossless negative imaginary transfer function matrices.
\end{proof}

\subsection{Partial-fraction expansion of lossless negative imaginary systems}
In this subsection, we consider the minor decomposition theory of lossless negative imaginary systems in terms of a partial-fraction expansion, which provides the core to develop the lossless negative imaginary theory in this paper.

Suppose $G(s)$ is a square real-rational lossless negative imaginary transfer matrix. Then, define the following matrices,
\begin{align}
\label{eq:4-2}
\begin{aligned}
A_{2}&=\lim_{\omega \rightarrow \infty}\frac{G(j\omega)}{(j\omega)^{2}},\quad
C_2=\lim_{s\rightarrow0}s^2G(s),\\
A_1&=\lim_{\omega \rightarrow \infty}\frac{(G(j\omega)-(j\omega)^2A_2)}{j\omega},\\
C_1&=\lim_{s\rightarrow0}s(G(s)-\frac{C_2}{s^2}).
\end{aligned}
\end{align}
According to Lemma \ref{lem:3}, it follows that $A_{2}=A^{*}_{2}\leq0$ and $C_{2}=C^{*}_{2}\geq0$. Note that $j\omega$ is a pole of $G(s)$, the $-j\omega$ must also be a pole of $G(s)$, that is, $j\omega$ and $-j\omega$ occur in pairs. Because all the poles of lossless negative imaginary transfer matrices lie on the imaginary axis, and the forms for these poles in a partial-fraction expansion are known, $G(s)$ can be decomposed in terms of those residue matrix properties as the following form: 
\begin{align*}
G(s)=&\sum_{i}\frac{-jK_{i}}{s-j\omega_{i}}+\sum_{i}\frac{jK^{*}_{i}}{s+j\omega_{i}}
+\frac{1}{s}C_{1}+\frac{1}{s^{2}}C_{2}\\
&+sA_{1}+s^{2}A_{2}+G(\infty)\\
=&\sum_{i}\frac{sQ_{i}+T_{i}}{s^{2}+\omega^{2}_{i}}+\frac{1}{s}C_{1}
+\frac{1}{s^{2}}C_{2}+sA_{1}+s^{2}A_{2}+G(\infty),
\end{align*}
where $K_{i}$ is the residue matrix of $jG(s)$ at $j\omega_{i}$, $A_{2}=A^{*}_{2}\leq0$, $C_{2}=C^{*}_{2}\geq0$, $Q_{i}=j(K^{*}_{i}-K_{i})$, and $T_{i}=\omega_{i}(K_{i}+K^{*}_{i})$. According to Condition 3 of Lemma \ref{lem:3}, we know $K_{i}=K^{*}_{i}\geq0$, it follows that $Q_{i}=0$ and $T_{i}=T^{*}_{i}$. Then, we have
\begin{equation*}
G(s)=\sum_{i}\frac{T_{i}}{s^{2}+\omega^{2}_{i}}+\frac{1}{s}C_{1}
+\frac{1}{s^{2}}C_{2}+sA_{1}+s^{2}A_{2}+G(\infty).
\end{equation*}
We can find that $\sum_{i}\frac{T_{i}}{s^{2}+\omega^{2}_{i}}$, $\frac{1}{s^{2}}C_{2}$ and
$s^{2}A_{2}$ are lossless negative imaginary. The fact that they are negative imaginary is immediate from the definition of negative imaginary systems, the lossless character follows by observing that
\begin{align*}
&j\big[\frac{1}{(j\omega)^{2}}C_{2}-\frac{1}{(-j\omega)^{2}}C^{*}_{2}\big]=0;\\
&j[(j\omega)^{2}A_{2}-(-j\omega)^{2}A^{*}_{2}]=0;\\
&j\big[\sum_{i}\frac{T_{i}}{(j\omega)^{2}+\omega^{2}_{i}}
-\sum_{i}\frac{T^{*}_{i}}{(-j\omega)^{2}+\omega^{2}_{i}}\big]=0.
\end{align*}


Now, we will study the properties of matrices $A_{1}$ and $C_{1}$ in the following lemma, where $A_1$ and $C_1$ are defined in \eqref{eq:4-2}.
\begin{lemma}
\label{lem:5}
Given a square real-rational lossless negative imaginary transfer function matrix $G(s)$.
Then $A_1+A^{*}_1=0$ and $C_1+C^{*}_1=0$ hold.
\end{lemma}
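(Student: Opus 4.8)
The plan is to derive both identities from the single structural fact recorded as Condition~\ref{lem:1-5} of Lemma~\ref{lem:3}, namely $G(s)=G^{T}(-s)$, together with the observation that the partial-fraction coefficients of a real-rational matrix are themselves real. Since $G(s)$ is real-rational, every matrix appearing in its partial-fraction expansion---in particular $A_{1}$ and $C_{1}$---is real, so that $A_{1}^{*}=A_{1}^{T}$ and $C_{1}^{*}=C_{1}^{T}$. Consequently the two claimed identities $A_{1}+A_{1}^{*}=0$ and $C_{1}+C_{1}^{*}=0$ are equivalent to the statements that $A_{1}$ and $C_{1}$ are skew-symmetric, i.e.\ $A_{1}=-A_{1}^{T}$ and $C_{1}=-C_{1}^{T}$. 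This reduction is the first step and isolates exactly what must be shown.

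The core step is to feed the symmetry $G(s)=G^{T}(-s)$ into the expansion. Replacing $s$ by $-s$ and transposing the displayed partial-fraction form of $G(s)$, the even-degree terms $s^{2}A_{2}$, $\tfrac{1}{s^{2}}C_{2}$, $\sum_{i}\tfrac{T_{i}}{s^{2}+\omega_{i}^{2}}$ and $G(\infty)$ are left invariant, while the two odd-degree terms $sA_{1}$ and $\tfrac{1}{s}C_{1}$ pick up a sign change, producing $-sA_{1}^{T}$ and $-\tfrac{1}{s}C_{1}^{T}$. Setting $G(s)$ equal to $G^{T}(-s)$ and invoking uniqueness of the partial-fraction expansion then lets me match the coefficient of $s$ to obtain $A_{1}=-A_{1}^{T}$ and the coefficient of $\tfrac{1}{s}$ to obtain $C_{1}=-C_{1}^{T}$. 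Combining with the first step yields $A_{1}+A_{1}^{*}=A_{1}+A_{1}^{T}=0$ and $C_{1}+C_{1}^{*}=C_{1}+C_{1}^{T}=0$, as required.

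An equivalent route, staying closer to the limit definitions in \eqref{eq:4-2}, is to transpose the defining limit $A_{1}=\lim_{\omega\rightarrow\infty}\tfrac{G(j\omega)-(j\omega)^{2}A_{2}}{j\omega}$, use $A_{2}^{T}=A_{2}$ (which follows from $A_{2}=A_{2}^{*}$ and realness), and then substitute $G^{T}(j\omega)=G(-j\omega)$ coming from $G(s)=G^{T}(-s)$; after noting $(j\omega)^{2}=(-j\omega)^{2}$ and recognizing the resulting limit as $A_{1}$ evaluated along $s=-j\omega\rightarrow -j\infty$, one again gets $A_{1}^{T}=-A_{1}$. The same manipulation applied to $C_{1}=\lim_{s\rightarrow 0}s\big(G(s)-\tfrac{C_{2}}{s^{2}}\big)$, with $C_{2}^{T}=C_{2}$ and the change of variable $s\mapsto -s$, gives $C_{1}^{T}=-C_{1}$.

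I expect the only delicate points to be bookkeeping rather than conceptual. First, one must confirm that $A_{1}$ and $C_{1}$ are genuinely real; this is cleanest to attribute to the real-rational nature of $G(s)$, although it can also be verified directly by conjugating the defining limits. Second, the coefficient-matching must be justified rigorously: it rests on the uniqueness of the decomposition into terms with distinct pole structure and distinct growth rates $s^{2},s,1,\tfrac{1}{s},\tfrac{1}{s^{2}}$, so that the $s$- and $\tfrac{1}{s}$-coefficients can be isolated unambiguously. No genuinely hard estimate arises; the main obstacle is simply to carry the signs under $s\mapsto -s$ and the transpose through consistently.
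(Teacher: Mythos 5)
Your proof is correct, but it takes a genuinely different route from the paper's. You derive skew-symmetry of $A_{1}$ and $C_{1}$ from the symmetry $G(s)=G^{T}(-s)$ (Condition 5 of Lemma \ref{lem:3}), combined with realness of the Laurent coefficients of a real-rational matrix and uniqueness of the expansions at $s=0$ and $s=\infty$; the paper instead works directly from Condition 2 of Definition \ref{def:4}, splitting into cases (no pole, simple pole, double pole at infinity and at zero), computing $j[G(j\omega)-G^{*}(j\omega)]=-\omega(A_{1}+A_{1}^{*})+\tfrac{1}{\omega}(C_{1}+C_{1}^{*})+j[G_{0}(j\omega)-G_{0}^{*}(j\omega)]=0$ and isolating each term by its growth rate in $\omega$ (the $\omega$-term must vanish because the strictly proper part decays, and then the $\tfrac{1}{\omega}$-term must vanish as well). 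Your approach buys a cleaner, case-free argument and actually yields the slightly stronger conclusion that $A_{1}$ and $C_{1}$ are real skew-symmetric matrices, at the cost of invoking Lemma \ref{lem:3}, Condition \ref{lem:1-5} (itself proved via the maximum modulus theorem) and the coefficient-realness fact; the paper's approach stays closer to the frequency-domain definition and mirrors the asymptotic arguments it reuses elsewhere, but is more laborious. Two small points of bookkeeping in your write-up: the even-degree terms are not literally ``left invariant'' under $s\mapsto-s$ plus transposition---they become $\sum_{i}\tfrac{T_{i}^{T}}{s^{2}+\omega_{i}^{2}}$, $\tfrac{1}{s^{2}}C_{2}^{T}$, $s^{2}A_{2}^{T}$, $G^{T}(\infty)$---though this is harmless since you only match the odd-degree coefficients; and you should note that the $T_{i}$-form of the expansion displayed before Lemma \ref{lem:5} already uses Condition 3 of Lemma \ref{lem:3} (so your argument is free of circularity, but it does depend on that prior derivation, or alternatively only on the Laurent expansions at $0$ and $\infty$ as in your third paragraph, which is the tidier way to phrase the matching).
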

\begin{proof}
Suppose $G(s)$ is lossless negative imaginary. It follows that $G(s)$ has at most a double pole at infinity and zero.
First, we will prove $A_1+A^{*}_1=0$. When $G(s)$ has no poles at infinity, one has that $A_2=0$, $A_1=0$, and hence $A_1+A^{*}_1=0$.

When $G(s)$ has a simple pole at infinity, $A_2=0$. Let
\begin{equation*}
G(s)=G_{0}(s)+sA_1+G(\infty),
\end{equation*}
 where $G_{0}(s)$ is strictly proper. $G(s)$ and $G_{0}(s)$ have the same poles except at infinity. Condition \ref{def:1-2} of Definition \ref{def:4} implies that
\begin{align}
\label{eq:2}
\begin{aligned}
j[G(j\omega)&-G^{*}(j\omega)]\\
=&j[j\omega A_1+G_{0}(j\omega)+G^{T}(\infty)+j\omega A^{*}_1\\
&-G^{*}_{0}(j\omega)-G^{*}(\infty)]\\
=&-\omega(A_1+A^{*}_{1})+j[G_{0}(j\omega)-G^{*}_{0}(j\omega)]=0,
\end{aligned}
\end{align}
for all $\omega>0$, where $j\omega$ is not a pole of $G(s)$ and $G_{0}(s)$.
Suppose $A_1+A^{*}_{1}\neq0$. Then, since $G_{0}(s)$ is strictly proper, there exists a sufficiently large $\omega_1$ such that $j[G_{0}(j\omega_1)-G^{*}_{0}(j\omega_1)]=0$, which contradicts with \eqref{eq:2}. So, $A_1+A^{*}_{1}=0$.

When $G(s)$ has a double pole at infinity, Let
\begin{equation*}
 G(s)=G_{0}(s)+sA_1+s^{2}A_2+G(\infty),
\end{equation*}
where $A_2=A^{*}_2\leq0$ and $G_{0}(s)$ is strictly proper. Similarly, Condition \ref{def:1-2} of Definition \ref{def:4} implies that $
j[G(j\omega)-G^{*}(j\omega)]
=-\omega(A_1+A^{*}_{1})+j[G_{0}(j\omega)-G^{*}_{0}(j\omega)]=0
$. Using the similar analysis as the case where $G(s)$ has a simple pole at infinity, we have $A_1+A^{*}_{1}=0$.

Next, we will prove $C_1+C^{*}_{1}=0$. When $G(s)$ has no poles at zero, one has that $C_2=0$, $C_1=0$, and hence $C_1+C^{*}_1=0$.

When $G(s)$ has a simple pole at zero, $C_2=0$. Let
\begin{equation*}
 G(s)=\sum_{i}\frac{T_{i}}{s^{2}+\omega^{2}_{i}}+\frac{1}{s}C_1+sA_1+s^{2}A_2+G(\infty),
 \end{equation*}
where $T_{i}=T^{*}_{i}$. Condition \ref{def:1-2} of Definition \ref{def:4} implies that
\begin{equation*}
\label{eq:3}
\begin{array}{l}
j[G(j\omega)-G^{*}(j\omega)]\\
=j\bigg[\sum_{i}\frac{T_{i}}{(j\omega)^{2}+\omega^{2}_{i}}+\frac{1}{j\omega}C_1+(j\omega)A_1+(j\omega)^{2}A_2+G(\infty)\\
-\sum_{i}\frac{T^{*}_{i}}{(-j\omega^{2})+\omega^{2}_{i}}-\frac{1}{(-j\omega)}C_1-(-j\omega)A_1
-(-j\omega)^{2}A_2-G^{*}(\infty)\bigg]\\
=-\omega(A_1+A^{*}_{1})+\frac{1}{\omega}(C_1+C^{*}_{1})]=0,
\end{array}
\end{equation*}
for all $\omega>0$, where $j\omega$ is not a pole of $G(s)$. Because $-\omega(A_1+A^{*}_{1})=0$, it follows that $C_1+C^{*}_{1}=0$.

When $G(s)$ has a double pole at zero, let
\begin{equation*}
 G(s)=\sum_{i}\frac{T_{i}}{s^{2}+\omega^{2}_{i}}+\frac{1}{s}C_1+\frac{1}{s^{2}}C_2+sA_1+s^{2}A_2+G(\infty),
 \end{equation*}
 where $T_{i}=T^{*}_{i}$. Then, Condition \ref{def:1-2} of Definition \ref{def:4}
 with $A_1+A^{*}_1=0$
 implies that
\begin{equation*}
j[G(j\omega)-G^{*}(j\omega)]
=\frac{1}{\omega}(C_1+C^{*}_{1})]=0.
\end{equation*}
 This completes the proof.
\end{proof}
\begin{remark}
If $G(s)$ is a symmetric lossless negative imaginary transfer matrix, it follows that $A_{1}=A^{*}_{1}\leq0$ and $C_1=C^{*}_{1}\geq0$, and hence, $A_1=0$ and $C_1=0$. In other words, it is impossible for symmetric lossless negative imaginary transfer matrix having  simple poles at zero and infinity.
\end{remark}

The following lemma gives a decomposed property about lossless negative imaginary transfer matrices.
\begin{lemma}
\label{lem:6}
Let $G(s)$ be a square real-rational transfer function matrix of the form
\begin{equation*}
G(s)=G_{0}(s)+\frac{1}{s}C_{1}+\frac{1}{s^{2}}C_{2}+sA_{1}+s^{2}A_{2}+G(\infty),
\end{equation*}
where $G_{0}(s)$ is strictly proper, and $G_{0}(s)$ has no poles at zero and infinity. Then, $G(s)$ is lossless negative imaginary if and only if $G_{0}(s)$ is lossless negative imaginary, $A_{2}=A^{*}_{2}\leq0$, $C_{2}=C^{*}_{2}\geq0$, $A_{1}+A^{*}_{1}=0$, $C_{1}+C^{*}_{1}=0$ and $G(\infty)=G^{T}(\infty)$.
\end{lemma}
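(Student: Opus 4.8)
The plan is to prove both directions by checking against the characterization in Lemma \ref{lem:3}, exploiting the fact that the coefficient matrices $A_1,A_2,C_1,C_2$ and $G(\infty)$ are precisely the (real) coefficients appearing in the Laurent expansion of $G(s)$ about $s=0$ and in its polynomial part at $s=\infty$; because $G(s)$ (respectively $G_0(s)$) is real-rational these matrices are all \emph{real}, so that the Hermitian condition $A_2=A_2^{*}$ reads $A_2=A_2^{T}$, while $A_1+A_1^{*}=0$ reads $A_1+A_1^{T}=0$, and similarly for $C_1,C_2,G(\infty)$. This realness is the bridge that lets the termwise symmetries be recombined into the single identity $G(s)=G^{T}(-s)$ of Condition \ref{lem:1-5} of Lemma \ref{lem:3}.

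For necessity, suppose $G(s)$ is lossless negative imaginary. Identifying $A_1,A_2,C_1,C_2$ with the limits in \eqref{eq:4-2}, Conditions \ref{lem:1-2} and \ref{lem:1-4} of Lemma \ref{lem:3} give immediately $A_2=A_2^{*}\leq0$ and $C_2=C_2^{*}\geq0$, while Lemma \ref{lem:5} gives $A_1+A_1^{*}=0$ and $C_1+C_1^{*}=0$. Condition \ref{lem:1-5} supplies $G(s)=G^{T}(-s)$; matching the constant (feedthrough) terms on the two sides of this identity yields $G(\infty)=G^{T}(\infty)$. It remains to show that $G_0(s)=G(s)-\tfrac{1}{s}C_1-\tfrac{1}{s^{2}}C_2-sA_1-s^{2}A_2-G(\infty)$ is itself lossless negative imaginary, which I do by verifying the conditions of Lemma \ref{lem:3} for $G_0$: its poles are purely imaginary (inherited from $G$), it has no pole at $0$ or $\infty$ by hypothesis so Conditions \ref{lem:1-2} and \ref{lem:1-4} are vacuous, its residues at each finite $j\omega_0$ coincide with those of $G$ and are therefore positive semidefinite Hermitian, and $G_0(s)=G_0^{T}(-s)$ follows by subtracting the already-established termwise symmetries of the removed terms from $G(s)=G^{T}(-s)$.

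For sufficiency, suppose the six stated conditions hold, and verify Lemma \ref{lem:3} for $G(s)$ directly. The poles of $G(s)$ are those of $G_0(s)$ (purely imaginary, since $G_0$ is lossless negative imaginary) together with the poles at $0$ and $\infty$ contributed by the appended terms, so Condition \ref{lem:1-1} holds. At the origin $\lim_{s\to0}s^{2}G(s)=C_2=C_2^{*}\geq0$ and $\lim_{s\to0}s^{m}G(s)=0$ for $m\geq3$; at infinity $\lim_{\omega\to\infty}G(j\omega)/(j\omega)^{2}=A_2=A_2^{*}\leq0$ and the higher limits vanish, so Conditions \ref{lem:1-2} and \ref{lem:1-4} hold with at most double poles. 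The residues at nonzero finite imaginary poles are exactly those of $G_0(s)$, hence positive semidefinite Hermitian, giving Condition \ref{lem:1-3}. Finally, using $G_0(s)=G_0^{T}(-s)$, the symmetry of $C_2,A_2,G(\infty)$ and the skew-symmetry of $C_1,A_1$, a termwise computation of $G^{T}(-s)$ reproduces $G(s)$, establishing Condition \ref{lem:1-5}; Lemma \ref{lem:3} then gives that $G(s)$ is lossless negative imaginary.

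The step I expect to be the main obstacle is the necessity claim that $G_0(s)$ is lossless negative imaginary, specifically the verification of $G_0(s)=G_0^{T}(-s)$: this cannot be invoked until the individual symmetries of all five extracted matrices have been established, so the argument must first pin down $A_2=A_2^{T}$, $C_2=C_2^{T}$, $A_1=-A_1^{T}$, $C_1=-C_1^{T}$ and $G(\infty)=G^{T}(\infty)$, and only then subtract the corresponding terms from $G(s)=G^{T}(-s)$. Once Lemma \ref{lem:3}, Lemma \ref{lem:5} and the decomposition identities preceding \eqref{eq:4-2} are in hand, however, every remaining verification is a matching of coefficients in the expansions at $s=0$ and $s=\infty$, requiring no new analytic machinery.
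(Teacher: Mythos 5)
Your proof is correct, and its skeleton matches the paper's: both directions rest on the given decomposition together with Lemma \ref{lem:3} and Lemma \ref{lem:5}, which in the necessity direction deliver the conditions on $A_1$, $A_2$, $C_1$, $C_2$ and $G(\infty)$ exactly as you use them. Where you diverge is in how the lossless negative imaginary property of the remaining piece is certified. For necessity, the paper transfers the frequency-domain condition of Definition \ref{def:4} from $G$ to $G_0$ --- using the just-established symmetries to get $j[G(j\omega)-G^{*}(j\omega)]=j[G_{0}(j\omega)-G^{*}_{0}(j\omega)]=0$ --- and then matches residues at the finite imaginary poles; you instead verify all five conditions of Lemma \ref{lem:3} for $G_0$, obtaining the rational symmetry $G_0(s)=G_0^{T}(-s)$ by termwise subtraction from $G(s)=G^{T}(-s)$. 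For sufficiency, the paper observes that each appended term $\frac{1}{s}C_1$, $\frac{1}{s^2}C_2$, $sA_1$, $s^2A_2$ (and the constant $G(\infty)$) is itself lossless negative imaginary and invokes the sum-closure property of lossless negative imaginary systems (the paper's Lemma 4), whereas you re-verify Lemma \ref{lem:3} for $G$ from scratch, again via a termwise computation of $G^{T}(-s)$. Both routes are sound; yours has the merit of making explicit the realness of the coefficient matrices extracted from a real-rational $G$ (so that $C_1+C_1^{*}=0$ genuinely means $C_1=-C_1^{T}$, the bridge needed for the transpose identities), a point the paper uses silently, while the paper's appeal to additivity spares it the pole/residue bookkeeping that your sufficiency direction repeats.
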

\begin{proof}
(Necessity) Suppose $G(s)$ is lossless negative imaginary. According to Lemmas \ref{lem:3} and \ref{lem:5}, it follows that $A_{2}=A^{*}_{2}\leq0$, $C_{2}=C^{*}_{2}\geq0$, $A_{1}+A^{*}_{1}=0$, $C_{1}+C^{*}_{1}=0$ and $G(\infty)=G^{T}(\infty)$. $G(s)$ and $G_{0}(s)$ have the same poles except at zero and infinity. For $\omega>0$, $j\omega$ is not a pole of $G(s)$ and $G_{0}(s)$, we have
\begin{equation*}
j[G(j\omega)-G^{*}(j\omega)]=j[G_{0}(j\omega)-G^{*}_{0}(j\omega)]=0.
\end{equation*}
If $j\omega_{0}$, $\omega_{0}>0$ is a pole of $G(s)$, then $\lim_{s\rightarrow j\omega_{0}}(s-j\omega_{0})jG(s)=\lim_{s\rightarrow j\omega_{0}}(s-j\omega_{0})jG_{0}(s)$. Hence, $G_{0}(s)$ is lossless negative imaginary.

(Sufficiency) Suppose $G_{0}(s)$ is lossless negative imaginary, and $A_{2}=A^{*}_{2}\leq0$, $C_{2}=C^{*}_{2}\geq0$, $A_{1}+A^{*}_{1}=0$, $C_{1}+C^{*}_{1}=0$ and $G(\infty)=G^{T}(\infty)$. It follows that $\frac{1}{s}C_{1}$, $\frac{1}{s^{2}}C_{2}$, $sA_{1}$, $s^{2}A_{2}$ are lossless negative imaginary. Then, according to the sum properties of lossless negative imaginary systems, $G(s)$ is lossless negative imaginary.
\end{proof}

\begin{remark}
Based on the analysis in this subsection, we can find that the lossless negative imaginary transfer matrices can be seen as a sum of several lossless negative imaginary transfer matrices. For example, consider $G(s)=\frac{1-2s^{4}}{s^{2}(s^2+1)}$. $G(s)$ can be decomposed as $G(s)=\frac{1}{s^{2}}+\frac{1}{s^2+1}-2$, where $C_{2}=1$, $G(\infty)=-2$, the residue matrix of $jG(s)$ at $s=j$ is $K=\frac{1}{2}$, and hence $T_{1}=K+K^{*}=1$. Both $\frac{1}{s^2}$ and $\frac{1}{s^2+1}$ are lossless negative imaginary. Moreover, the negative imaginary transfer matrices also have similar properties as Lemma \ref{lem:6}. We are able to decompose any negative imaginary transfer matrix $G(s)$ into the sum of a lossless negative imaginary transfer matrix $G_{LNI}(s)$ and a negative imaginary transfer matrix $G_{NI}(s)$.
\end{remark}

Decompose the lossless negative imaginary transfer matrices into proper part and non-proper part. We have the following result.
\begin{corollary}
\label{co:1}
Let $G(s)$ be a square real-rational transfer function matrix of the form
\begin{equation*}
G(s)=G_{0}(s)+sA_{1}+s^{2}A_{2}+\sum_{i}s^{i}A_{i},
\end{equation*}
where $G_{0}(s)$ has no poles at infinity. Then, $G(s)$ is lossless negative imaginary if and only if $G_{0}(s)$ is lossless negative imaginary, $A_{2}=A^{*}_{2}\leq0$, $A_{1}+A^{*}_{1}=0$ and $A_{i}=0$ for $i\geq3$.
\end{corollary}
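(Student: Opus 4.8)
The plan is to reduce Corollary~\ref{co:1} to Lemma~\ref{lem:6}, which already handles the full non-proper expansion including poles at both zero and infinity. First I would observe that the statement of the corollary is essentially the ``infinity only'' specialization of Lemma~\ref{lem:6}: here the only non-proper terms come from poles at infinity (the $sA_1$, $s^2A_2$, and higher-order $s^iA_i$ terms), while the part of $G(s)$ associated with poles at zero and on the finite imaginary axis is absorbed into $G_0(s)$, which is assumed only to have no poles at infinity. So the first step is simply to match up the hypotheses and identify $A_1$, $A_2$ with the quantities defined in \eqref{eq:4-2}, and to note that the terms $s^iA_i$ for $i\geq 3$ are exactly the obstruction that Definition~\ref{def:3}, Condition~\ref{def:3-4} (equivalently Lemma~\ref{lem:3}, Condition~\ref{lem:1-4}) forbids.

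The necessity direction would go as follows. Assuming $G(s)$ is lossless negative imaginary, I would invoke Lemma~\ref{lem:3} directly. Condition~\ref{lem:1-4} says that if $s=j\infty$ is a pole it is at most a double pole, that $\lim_{\omega\to\infty} G(j\omega)/(j\omega)^2 = A_2$ is negative semidefinite Hermitian (giving $A_2 = A_2^* \leq 0$), and that $\lim_{\omega\to\infty} G(j\omega)/(j\omega)^m = 0$ for all $m\geq 3$. This last limit forces $A_i = 0$ for all $i\geq 3$, since otherwise the highest-order term $s^iA_i$ would make that limit nonzero. The condition $A_1 + A_1^* = 0$ then follows immediately from Lemma~\ref{lem:5}. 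Finally, once the non-proper terms in $s$ are stripped off, $G_0(s)$ inherits the lossless negative imaginary property by the same residue-matching argument used in the necessity part of Lemma~\ref{lem:6}: $G(s)$ and $G_0(s)$ share all poles on the finite imaginary axis (and at zero, which lives inside $G_0$ here) together with their residues, and $j[G(j\omega)-G^*(j\omega)] = j[G_0(j\omega)-G_0^*(j\omega)]$ on the common domain because the discarded terms contribute zero to the Hermitian part along the imaginary axis.

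For sufficiency I would run the argument in reverse. Given that $G_0(s)$ is lossless negative imaginary, $A_2 = A_2^*\leq 0$, $A_1 + A_1^* = 0$, and $A_i = 0$ for $i\geq 3$, the higher-order terms vanish outright, so $G(s) = G_0(s) + sA_1 + s^2A_2$. Each summand is individually lossless negative imaginary: $s^2A_2$ and $sA_1$ are checked exactly as in the displayed computations preceding Lemma~\ref{lem:5} (the identities $j[(j\omega)^2A_2 - (-j\omega)^2A_2^*]=0$ and the analogous one for $sA_1$ using $A_1+A_1^*=0$), and $G_0(s)$ is lossless negative imaginary by hypothesis. Then the sum property from the unnamed lemma on sums of lossless negative imaginary matrices closes the argument.

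I do not expect a genuine obstacle here, since the corollary is a near-immediate consequence of Lemma~\ref{lem:6} and Lemma~\ref{lem:3}; the only point requiring care is the treatment of the higher-order terms $\sum_i s^iA_i$, which are absent from Lemma~\ref{lem:6} but appear in the corollary. The main thing to get right is therefore the bookkeeping that the vanishing-at-infinity condition $\lim_{\omega\to\infty} G(j\omega)/(j\omega)^m = 0$ for $m\geq 3$ is \emph{equivalent} to $A_i=0$ for $i\geq 3$, rather than merely implied by it, so that the iff statement holds in both directions. Once that equivalence is recorded, the proof reduces to citing Lemmas~\ref{lem:3}, \ref{lem:5}, and \ref{lem:6} and the sum property.
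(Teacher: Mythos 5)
Your proposal is correct: the paper's own proof of Corollary~\ref{co:1} is literally the single word ``Trivial,'' and your argument supplies exactly the details that word is standing in for, using the paper's own machinery (Lemma~\ref{lem:3} Condition~\ref{lem:1-4} to kill the $A_i$ for $i\geq 3$ and get $A_2=A_2^*\leq 0$, Lemma~\ref{lem:5} for $A_1+A_1^*=0$, the residue/frequency-matching argument of Lemma~\ref{lem:6} for $G_0$, and the sum property for sufficiency). You also correctly flag the one point of genuine bookkeeping --- that $G_0$ here may retain poles at zero, so the corollary is not a verbatim specialization of Lemma~\ref{lem:6}, and that the vanishing of the $A_i$, $i\geq 3$, is equivalent to (not merely implied by) the limit condition at infinity --- so no gap remains.
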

\begin{proof}
Trivial.
\end{proof}
\begin{remark}
Corollary \ref{co:1} is useful in deriving the non-proper descriptor lossless negative imaginary lemma. Also, for the non-proper negative imaginary transfer matrices, we have a similar result, that is, $G(s)$ is negative imaginary if and only if $A_{2}=A^{*}_{2}\leq0$ and $G_{0}(s)+sA_{1}$ is negative imaginary, which is also useful in the study of non-proper descriptor negative imaginary systems. If $G(s)$ is symmetric, we have $G(s)$ is negative imaginary if and only if $A_{2}=A^{*}_{2}\leq0$, $A_{1}=A^{*}_{1}\leq0$ and $G(s)$ is negative imaginary.
\end{remark}

\section{Relationship between lossless positive real and lossless negative imaginary systems}
\label{sec:relation}
In this section, two new relationships between lossless negative imaginary and lossless positive real transfer functions will be established in non-proper and proper case. First, we present a new description of the relationship between non-proper lossless negative imaginary and non-proper lossless positive real transfer matrices.
\begin{lemma}
\label{lem:7}
Let $G(s)$ be a square real-rational transfer function matrix. Suppose $G(s)$ has no poles at zero. Then $G(s)$ is lossless negative imaginary if and only if
\begin{enumerate}
  \item $G(0)=G^{T}(0)$;
  \item $F(s)=-\frac{1}{s}[G(s)-G(0)]$ is lossless positive real.
\end{enumerate}
\end{lemma}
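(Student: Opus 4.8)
The plan is to prove both implications from the algebraic identity $G(s)=G(0)-sF(s)$ together with the single boundary relation
\begin{equation*}
F(j\omega)+F^{*}(j\omega)=\tfrac{1}{\omega}\,j\big[G(j\omega)-G^{*}(j\omega)\big],\qquad \omega>0,
\end{equation*}
which I would obtain by substituting $F(s)=-\tfrac{1}{s}[G(s)-G(0)]$ and using the real-rational symmetries $F^{*}(j\omega)=F^{T}(-j\omega)$ and $G(-j\omega)=\overline{G(j\omega)}$, together with $G^{T}(0)=G(0)$. Since $1/\omega>0$, this identity transports both the defining inequality (the negative imaginary quantity $j[G(j\omega)-G^{*}(j\omega)]\ge0$ versus the positive real quantity $F(j\omega)+F^{*}(j\omega)\ge0$) and, what matters here, the lossless equality ($=0$) between $G$ and $F$ on the imaginary axis. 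It is the core of the argument; everything else is the bookkeeping of poles and residues through the factor $-1/s$.

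For necessity, I would first obtain condition~1: Lemma~\ref{lem:3}(\ref{lem:1-5}) gives $G(s)=G^{T}(-s)$, and setting $s=0$ yields $G(0)=G^{T}(0)$. For condition~2, since $G$ has no pole at $0$ the partial-fraction expansion established earlier contains no $\tfrac1sC_1$ or $\tfrac1{s^{2}}C_2$ term, so
\begin{equation*}
G(s)=\sum_i\frac{T_i}{s^{2}+\omega_i^{2}}+sA_1+s^{2}A_2+G(\infty),\quad T_i=T_i^{*}\ge0,\ A_2=A_2^{*}\le0,\ A_1+A_1^{*}=0.
\end{equation*}
Substituting into $F=-\tfrac1s[G-G(0)]$ and simplifying, each resulting summand $\tfrac{(T_i/\omega_i^{2})\,s}{s^{2}+\omega_i^{2}}$ (with $T_i/\omega_i^{2}\ge0$), $-sA_2$ (with $-A_2\ge0$), and $-A_1$ (skew-symmetric since $A_1+A_1^{*}=0$) is an elementary lossless positive real term; as lossless positive real functions form a convex set, their sum $F$ is lossless positive real. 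Alternatively one checks Definition~\ref{def:2} directly: analyticity of $F$ in $\mathrm{Re}[s]>0$ holds because all poles of $G$ are purely imaginary (Lemma~\ref{lem:3}(\ref{lem:1-1})) and the factor $-1/s$ introduces no pole at $0$, while $F(j\omega)+F^{*}(j\omega)=0$ comes from the boundary relation and the losslessness condition of Definition~\ref{def:4}.

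For sufficiency, given conditions~1 and~2, I would reconstruct $G(s)=G(0)-sF(s)$ and verify the five conditions of Lemma~\ref{lem:3}. Losslessness of $F$ gives $F(j\omega)+F^{*}(j\omega)=0$, hence $F(s)+F^{T}(-s)=0$ by analytic continuation (maximum modulus, as in the proof of Lemma~\ref{lem:3}); with $G^{T}(0)=G(0)$ this yields $G^{T}(-s)=G^{T}(0)+sF^{T}(-s)=G(0)-sF(s)=G(s)$, i.e.\ condition~\ref{lem:1-5}. Positive realness forces the poles of $F$ out of $\mathrm{Re}[s]>0$, and $F(s)+F^{T}(-s)=0$ forces them into $\pm$ pairs, so all poles of $G$ are purely imaginary (condition~\ref{lem:1-1}); the factor $s$ creates no pole at $0$, where $F$ is regular. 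The order/residue conditions~\ref{lem:1-2}--\ref{lem:1-4} then follow from the positive real pole facts: a simple pole of $F$ at $j\omega_0$ with Hermitian positive semidefinite residue $R$ gives $G$ a simple pole with $K=\lim_{s\to j\omega_0}(s-j\omega_0)jG(s)=\omega_0R\ge0$; a simple pole of $F$ at infinity with positive semidefinite residue produces a double pole of $G$ with $\lim_{\omega\to\infty}G(j\omega)/(j\omega)^{2}\le0$; and $G$ has no pole at $0$.

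The step I expect to be the main obstacle is the non-proper (infinity) behaviour and the definiteness bookkeeping: tracking how a double pole of $G$ at infinity corresponds to a simple pole of $F$, how the sign flips ($A_2\le0$ against $-A_2\ge0$) align with the positive real residue requirement, and — in the direct route to necessity — establishing the interior inequality $F(s)+F^{*}(s)\ge0$ on $\mathrm{Re}[s]>0$ from boundary data alone. The decomposition route circumvents this last difficulty by exhibiting $F$ explicitly as a sum of elementary lossless positive real blocks, which is why I would favour it.
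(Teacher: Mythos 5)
Your proof is correct, but it takes a genuinely different route from the paper's. The paper proves this lemma largely by reference to \cite[Lemma 9]{liu2016non}: necessity imports $G(0)=G^{T}(0)$ from there, explicitly checks only the new case of a simple pole of $G$ at infinity (writing $G(s)=sA_{1}+G_{0}(s)$ and noting that $F$ is then proper with $F(j\omega)+F^{*}(j\omega)\rightarrow-(A_{1}+A_{1}^{T})=0$), and defers everything else --- including the entire sufficiency direction --- to the proofs in \cite{liu2016non}. You instead stay inside this paper's own machinery: for necessity you read Condition 1 off Condition~\ref{lem:1-5} of Lemma~\ref{lem:3} at $s=0$ and use the partial-fraction expansion of Section~\ref{sec:pre} together with Lemma~\ref{lem:5} (with $C_{1}=C_{2}=0$ since there is no pole at zero) to exhibit $F(s)=\sum_{i}\frac{(T_{i}/\omega_{i}^{2})s}{s^{2}+\omega_{i}^{2}}-A_{1}-sA_{2}$ as a finite sum of elementary Foster-type lossless positive real blocks; for sufficiency you continue $F(j\omega)+F^{*}(j\omega)=0$ analytically to $F(s)+F^{T}(-s)=0$ and verify the five conditions of Lemma~\ref{lem:3} directly. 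Each route buys something: the paper's is shorter but rests on the external reference, and as written its case analysis covers only the no-pole and simple-pole cases of $G$ at infinity, whereas your decomposition handles the admissible double pole at infinity ($A_{2}\neq0$, giving the block $s(-A_{2})$ with $-A_{2}\geq0$) on the same footing, and it sidesteps the genuinely hard step of deducing interior positivity $F(s)+F^{*}(s)\geq0$ on $\mathrm{Re}[s]>0$ from boundary data, since each block is positive real by inspection. Two small repairs to yours: closure of lossless positive realness under addition should be argued directly from Definitions~\ref{def:1} and~\ref{def:2} (convexity alone gives only convex combinations; you also need invariance under positive scaling), and your sufficiency step tacitly uses that imaginary-axis poles (including infinity) of a positive real matrix are at most simple with positive semidefinite Hermitian residues --- a textbook fact that deserves an explicit citation to \cite{anderson1973network}.
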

\begin{proof}
(Necessity)
Suppose $G(s)$ is lossless negative imaginary.
It follows from \cite[Lemma 9]{liu2016non} that $G(0)=G^{T}(0)$.
When $G(s)$ has no pole at infinity, $F(s)$ has no poles at infinity. When $G(s)$ has a simple pole at infinity, then $F(s)$ has also no poles at infinity. Let $G(s)=sA_{1}+G_{0}(s)$, where $G_{0}(s)$ is proper and $A_{1}+A^{T}_{1}=0$. Then,
\begin{equation*}
F(s)=-A_1-\frac{1}{s}G_{0}(s)+\frac{1}{s}G(0).
\end{equation*}
As $\omega\rightarrow \infty$,
 it follows that
\begin{equation*}
F(j\omega)+F^{*}(j\omega)=-(A_{1}+A^{T}_1)=0.
\end{equation*}
The rest of the proof is the same as the necessity proof of
\cite[Lemma 9]{liu2016non}.

(Sufficiency) The sufficient proof is the same as the sufficient proof of \cite[Lemma 9]{liu2016non}.
\end{proof}
\begin{example}
To illustrate the usefulness of Lemma \ref{lem:7}, consider a non-proper transfer matrix $G(s)=\left(
                                 \begin{array}{cc}
                                   \frac{1}{s^{2}+1} & -s \\
                                   s & \frac{1}{s^2+1} \\
                                 \end{array}
                               \right)
$. $G(s)$ has no poles in $\mathrm{Re}[s]>0$. A calculation shows that $j[G(j\omega)-G^{*}(j\omega)]=0$. $G(s)$ has a simple pole at infinity and $s=j$. The residue matrix of $jG(s)$ at $s=j$ is positive semidefinite Hermitian, being %
 $K=\lim_{s\rightarrow j}(s-j)jG(s)=\left(
                                          \begin{array}{cc}
                                            \frac{1}{2} & 0 \\
                                            0 & \frac{1}{2} \\
                                          \end{array}
                                        \right)
$. Moreover, $\lim_{\omega\rightarrow \infty}\frac{G(j\omega)}{(j\omega)^{2}}=0$ and $A_1=\lim_{\omega\rightarrow \infty}\frac{G(j\omega)}{j\omega}=\left(
                                      \begin{array}{cc}
                                        0 & -1 \\
                                        1 & 0 \\
                                      \end{array}
                                    \right)
$, which satisfies $A_{1}+A^{*}_{1}=0$. According to Definitions \ref{def:3} and \ref{def:4}, it follows that $G(s)$ is lossless negative imaginary. We can say that $G(s)$ is lossless negative imaginary if and only if $F(s)=-\frac{1}{s}[G(s)-G(0)]=\left(
                                \begin{array}{cc}
                                  \frac{s}{s^{2}+1} & 1 \\
                                  -1 & \frac{s}{s^{2}+1} \\
                                \end{array}
                              \right)
$ is lossless positive real and $G(0)=G^{T}(0)$. A calculation shows that $F(s)$ satisfies all conditions in Definition \ref{def:2}: $F(s)$ is positive real, and $F(j\omega)+F^{*}(j\omega)=0$ for all $\omega$ with $j\omega$ not a pole of $F(s)$.
\end{example}

%
When $G(s)$ is a real-rational proper transfer matrix, we have the following result.

\begin{lemma}
\label{lem:8}
Let $G(s)$ be a square real-rational proper transfer function matrix. Then $G(s)$ is lossless negative imaginary if and only if
\begin{enumerate}
  \item $G(\infty)=G^{T}(\infty)$;
  \item $F(s)=s[G(s)-G(\infty)]$ is lossless positive real.
\end{enumerate}
\end{lemma}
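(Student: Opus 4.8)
The plan is to prove Lemma 8 by reducing it to the already-established Lemma 7, exploiting the duality between poles at zero and poles at infinity via the substitution $s \mapsto 1/s$. The structural parallel is unmistakable: Lemma 7 handles a transfer matrix with no poles at zero and builds $F(s) = -\frac{1}{s}[G(s)-G(0)]$, while Lemma 8 handles a proper (hence no pole at infinity) matrix and builds $F(s) = s[G(s)-G(\infty)]$. Since lossless negative imaginary and lossless positive real properties are both characterized by the symmetry condition $H(s)=H^T(-s)$ together with residue/limit conditions at poles (Lemma 3 for LNI), and since the map $s \mapsto 1/s$ interchanges the roles of $s=0$ and $s=\infty$ while preserving the imaginary axis and the sign structure of $j\omega$, I expect the two lemmas to be formal transforms of one another.

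First I would verify directly that $F(s)=s[G(s)-G(\infty)]$ is well-defined and has the right pole structure: since $G(s)$ is proper, $G(s)-G(\infty)$ is strictly proper, so multiplying by $s$ yields a proper (in fact, possibly bi-proper) rational matrix with no pole at infinity, and the poles of $F$ on the imaginary axis coincide with those of $G$ except possibly at $s=0$. Next I would prove the necessity direction along the lines of the necessity proof in Lemma 7: assuming $G(s)$ is LNI, I would first establish $G(\infty)=G^T(\infty)$ (this is immediate from Lemma 3, condition 5, by taking limits, or follows from Lemma 6). Then, using the partial-fraction decomposition established in the preceding subsection, I would write $G(s)$ in terms of its residue matrices $T_i$ at the poles $\pm j\omega_i$ plus the possible pole terms at zero, and verify that $F(s)=s[G(s)-G(\infty)]$ satisfies the symmetry $F(s)=-F^T(-s)$ (equivalently $F(j\omega)+F^*(j\omega)=0$) together with positive-realness of the residues. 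The key computation is that $sG(s)$ converts the symmetric residue matrices $T_i = T_i^* \geq 0$ of $G$ into the appropriate Hermitian structure for lossless positive realness, and that the possible simple pole of $G$ at zero (governed by $C_1$ with $C_1+C_1^*=0$ from Lemma 5) produces a constant term in $F$ with the correct skew-Hermitian symmetry.

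The main obstacle I anticipate is the careful bookkeeping at $s=0$. Unlike Lemma 7, where $G$ is assumed to have no poles at zero, here $G$ is merely proper and may possess a double pole at zero (with $C_2=C_2^*\geq0$ and $C_1+C_1^*=0$). Multiplying by $s$ reduces a double pole of $G$ at zero to a simple pole of $F$ at zero, and I must check that the resulting residue of $F$ at $s=0$ is positive semidefinite Hermitian — precisely the residue condition that lossless positive realness demands at an imaginary-axis pole. This is where the conditions $C_2 = C_2^* \geq 0$ and $C_1 + C_1^* = 0$ from Lemma 5 and Lemma 6 will be essential, and I expect the verification to hinge on computing $\lim_{s\to 0} sF(s) = \lim_{s\to 0} s^2[G(s)-G(\infty)] = C_2 \geq 0$.

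For the sufficiency direction, I would reverse the argument: assuming $G(\infty)=G^T(\infty)$ and that $F(s)=s[G(s)-G(\infty)]$ is lossless positive real, I recover $G(s) = \frac{1}{s}F(s) + G(\infty)$ and verify each condition of Lemma 3. Losslessness of $F$ gives $F(j\omega)+F^*(j\omega)=0$, which translates directly into $j[G(j\omega)-G^*(j\omega)]=0$ after dividing by $s=j\omega$ and using $G(\infty)=G^T(\infty)$; positive-realness of $F$ confines its poles to the closed left half-plane, and losslessness forces them onto the imaginary axis, yielding condition 1 of Lemma 3 for $G$. The residue positivity of $F$ at each imaginary pole transfers to the required sign conditions on the residues of $G$. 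As in Lemma 7, I would expect to be able to write \emph{``the rest of the proof parallels the proof of Lemma~\ref{lem:7}''} once the pole-at-zero bookkeeping is handled, since the remaining verifications are routine applications of Lemma~\ref{lem:3} and the spectral factorization machinery already invoked for the lossless positive real case.
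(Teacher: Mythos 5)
Your opening claim---that Lemma~\ref{lem:8} is the formal image of Lemma~\ref{lem:7} under the substitution $s\mapsto 1/s$---is false as stated, and a proof routed through that reduction would break. The map $s\mapsto 1/s$ sends $j\omega$ to $-j/\omega$: it preserves the imaginary axis only as a set and \emph{reverses its orientation}, and the negative imaginary conditions are orientation-sensitive, since $j[G(j\omega)-G^{*}(j\omega)]$ and the residue conditions flip sign under $\omega\mapsto-\omega$. Concretely, $G(s)=\frac{1}{s^{2}+1}$ is lossless negative imaginary, yet $G(1/s)=\frac{s^{2}}{s^{2}+1}$ is not even negative imaginary: the residue of $jG(1/s)$ at $s=j$ equals $-\frac{1}{2}$, which is not positive semidefinite. (Lossless \emph{positive realness} is indeed invariant under $s\mapsto1/s$, because $F+F^{*}$ is insensitive to the orientation of the axis; that is likely the source of the over-extrapolation.) The reduction can be repaired, but only with a sign: $G(s)$ is lossless negative imaginary if and only if $H(s)=-G(1/s)$ is, and then $-\frac{1}{s}\left[H(s)-H(0)\right]=F(1/s)$ with $F(s)=s[G(s)-G(\infty)]$, so one must additionally prove both the signed duality and the $1/s$-invariance of lossless positive realness---neither of which is free, so little effort is saved.

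The remainder of your proposal, however, quietly abandons the reduction and argues directly, and that plan is sound: $G(\infty)=G^{T}(\infty)$ from Lemma~\ref{lem:3}; the symmetry $F(s)+F^{T}(-s)=s\left[G(s)-G^{T}(-s)\right]-s\left[G(\infty)-G^{T}(\infty)\right]=0$; and the bookkeeping at $s=0$ that you correctly identify as the crux---a simple pole of $G$ contributes to $F$ the constant $C_{1}$ with $C_{1}+C_{1}^{T}=0$ (Lemma~\ref{lem:5}), while a double pole gives $F$ a simple pole at zero with residue $\lim_{s\to0}sF(s)=C_{2}\geq0$. This is a genuinely different route from the paper's. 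The paper never re-establishes positive realness of $F$ at all: it invokes Lemma~11 of \cite{liu2016non} (proper $G$ is negative imaginary iff $G(\infty)=G^{T}(\infty)$ and $F$ is positive real) and adds only the losslessness transfer, whose single new computation is precisely your simple-pole check $F(0)+F^{*}(0)=C_{1}+C_{1}^{T}=0$; everything else is deferred to the cited proof. Your partial-fraction argument is self-contained within this paper's own machinery (Lemmas~\ref{lem:3}, \ref{lem:5}, \ref{lem:6}), at the cost of verifying that each summand $\frac{sT_{i}}{s^{2}+\omega_{i}^{2}}$, $C_{1}$, $\frac{C_{2}}{s}$ of $F$ is lossless positive real and, for sufficiency, either justifying the converse Foster-type expansion of a lossless positive real $F$ or checking the conditions of Lemma~\ref{lem:3} for $G(s)=\frac{1}{s}F(s)+G(\infty)$ term by term, as you sketch. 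The mathematics of that direct route goes through; just delete or repair the $s\mapsto1/s$ framing.
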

\begin{proof}
(Necessity)
Suppose $G(s)$ is lossless negative imaginary.
It follows from \cite[Lemma 11]{liu2016non} that $G(\infty)=G^{T}(\infty)$.
When $G(s)$ has no pole at zero, $F(s)$ has no poles at zero, and $F(0)+F^{*}(0)=0$. When $G(s)$ has a simple pole at zero, then $F(s)$ has also no poles at zero. Let $G(s)=\frac{1}{s}C_1+G_{0}(s)$, where $G_{0}(s)$ has no poles at zero and $C_{1}+C^{*}_{1}=0$. Then,
$F(s)=C_1+sG_{0}(s)-sG(\infty)$,
 and hence,
$F(0)+F^{*}(0)=C_{1}+C^{T}_1=0$.
The rest of the proof is the same as the necessity proof of \cite[Lemma 11]{liu2016non}.

(Sufficiency) The sufficient proof is the same as the sufficient proof of \cite[Lemma 11]{liu2016non}.
\end{proof}
\begin{example}
As an illustration of Lemma \ref{lem:8}, consider a proper transfer matrix $G(s)=\left(
               \begin{array}{cc}
                 \frac{-s^2}{s^2+1} & \frac{1}{s}+1 \\
                 \frac{-1}{s}+1 & \frac{-s^2}{s^2+1} \\
               \end{array}
             \right)
$. It can be found that $G(s)$ is lossless negative imaginary if and only if $F(s)=s[G(s)-G(\infty)]=\left(
                          \begin{array}{cc}
                            \frac{s}{s^2+1} & 1 \\
                            -1 & \frac{s}{s^2+1} \\
                          \end{array}
                        \right)
$ is lossless positive real and $G(\infty)=G^{T}(\infty)$. A calculation shows that $G(s)$ and $F(s)$ satisfy all conditions in Definition \ref{def:4} and Definition \ref{def:2}, respectively. Note that $G(s)$ has a simple pole at zero, $C_{1}=\lim_{s\rightarrow0}sG(s)=\left(
                                   \begin{array}{cc}
                                     0 & 1 \\
                                     -1 & 0 \\
                                   \end{array}
                                 \right)
$ satisfies $C_{1}+C^{T}_{1}=0$, and $F(s)$ has no poles at zero.
\end{example}
\begin{remark}
Lemma \ref{lem:7} can be considered as a generalization of Lemma 9 in \cite{liu2016non} by allowing simple pole at infinity. Lemma \ref{lem:8} can be considered as a generalization of Lemma 11 in \cite{liu2016non} by allowing simple pole at zero.
\end{remark}

\section{Lossless negative imaginary lemma in state-space conditions}
\label{sec:lemma}
The lossless negative imaginary lemma proposed in this section, which is the main result of the paper, extends the lossless negative imaginary lemma in \cite{xiong2012lossless} to the case where the transfer
matrices may have poles at zero. Theorem \ref{th:1} could be considered as a modification of Lemma 2 in \cite {mabrok2015generalized} applied to lossless negative imaginary case.
\begin{theorem}
\label{th:1}
Let $(A,B,C,D)$ be a minimal state-space realization of a square real-rational proper transfer function matrix $G(s)\in\mathcal{R}^{m\times m}$, where $A\in\mathbb{R}^{n\times n}$, $B\in\mathbb{R}^{n\times m}$, $C\in\mathbb{R}^{m\times n}$, $D\in\mathbb{R}^{m\times m}$, and $m\leq n$. Then, $G(s)$ is lossless negative imaginary if and only if $D=D^{T}$, and there exists a real matrix $P=P^{T}\geq0$, $P\in\mathbb{R}^{n\times n}$, such that
\begin{align}
\label{eq:7}
\left(
  \begin{array}{cc}
    PA+A^{T}P & PB-A^{T}C^{T} \\
    B^TP-CA & CB+(CB)^{T} \\
  \end{array}
\right)=0.
\end{align}

\end{theorem}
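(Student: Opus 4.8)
The plan is to reduce the desired state-space characterization to the lossless positive real setting using Lemma~\ref{lem:8}, and then to invoke the relaxed lossless positive real lemma, Lemma~\ref{lem:2}. Since $G(s)$ is proper with realization $(A,B,C,D)$, we have $G(\infty)=D$, so the condition $G(\infty)=G^{T}(\infty)$ of Lemma~\ref{lem:8} is exactly $D=D^{T}$. By Lemma~\ref{lem:8}, $G(s)$ is lossless negative imaginary if and only if $D=D^{T}$ and $F(s)=s[G(s)-G(\infty)]=s[G(s)-D]$ is lossless positive real. The theorem will then follow once the lossless positive real conditions on $F(s)$, supplied by Lemma~\ref{lem:2}, are translated back into conditions on $(A,B,C,D)$.

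The key step is to build a state-space realization of $F(s)$ from that of $G(s)$. Writing $G(s)-D=C(sI-A)^{-1}B$ and using the identity $s(sI-A)^{-1}=I+A(sI-A)^{-1}$, I would obtain
\begin{equation*}
F(s)=sC(sI-A)^{-1}B=CA(sI-A)^{-1}B+CB,
\end{equation*}
so that $(A,B,CA,CB)$ is a realization of $F(s)$ with $F(\infty)=CB$; in particular $F(s)$ is proper and lies in $\mathcal{R}^{m\times m}$. This realization keeps the same pair $(A,B)$, which is completely controllable because $(A,B,C,D)$ is minimal, while the new output matrix $CA$ need not render the realization observable. This is precisely the situation covered by Lemma~\ref{lem:2}, whose relaxation of the observability assumption is therefore essential here.

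Applying Lemma~\ref{lem:2} to the realization $(A,B,CA,CB)$, the system $F(s)$ is lossless positive real if and only if there exists $P=P^{T}\geq0$ with
\begin{align*}
PA+A^{T}P&=0,\\
PB-(CA)^{T}&=0,\\
CB+(CB)^{T}&=0,
\end{align*}
that is, $PA+A^{T}P=0$, $PB-A^{T}C^{T}=0$, and $CB+(CB)^{T}=0$. Assembling these three identities into a single block matrix and noting that the off-diagonal blocks are transposes of one another, since $B^{T}P-CA=(PB-A^{T}C^{T})^{T}$ when $P=P^{T}$, reproduces exactly \eqref{eq:7}. Combining this equivalence with the reduction from Lemma~\ref{lem:8} establishes both the necessity and the sufficiency directions.

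I expect the main obstacle to be the second step: verifying that $(A,B,CA,CB)$ is a legitimate realization of $F(s)$ to which Lemma~\ref{lem:2} genuinely applies. When $G(s)$ has a pole at $s=0$, multiplication by $s$ lowers the order of that pole, so this realization is generally non-minimal, and the would-be extra mode must be absorbed as a loss of observability of the pair $(A,CA)$; this is exactly why Lemma~\ref{lem:2} was established without requiring observability. I would also confirm that the controllability of $(A,B)$ and the bound $m\leq n$ carry over unchanged from the realization of $G(s)$, so that all hypotheses of Lemma~\ref{lem:2} are met.
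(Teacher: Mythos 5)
Your proposal is correct and follows essentially the same route as the paper's own proof: reduce via Lemma~\ref{lem:8} to lossless positive realness of $F(s)=s[G(s)-D]$, realize $F(s)$ as $(A,B,CA,CB)$ with $(A,B)$ controllable but $(A,CA)$ possibly unobservable (precisely because $A$ may be singular), and apply Lemma~\ref{lem:2} before assembling the three identities into \eqref{eq:7}. Your extra details, namely the resolvent identity $s(sI-A)^{-1}=I+A(sI-A)^{-1}$ and the remark that the off-diagonal blocks of \eqref{eq:7} are transposes of one another, only make explicit what the paper leaves implicit.
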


\begin{proof}
 The equivalence follows along the following sequence of equivalent reformulations.

$G(s)\sim (A,B,C,D)$ is lossless negative imaginary.

$\Leftrightarrow$ $G(\infty)=G^{T}(\infty)$, and $F(s)=s[G(s)-G(\infty)]$ is lossless positive real (according to Lemma \ref{lem:8}).

$\Leftrightarrow$ $D=D^{T}$, and $F(s)\sim (A,B,CA,CB)$ is lossless positive real. Note that $(A,B)$ is completely controllable and $(A,CA)$ may be not observable. The reason is that $A$ may be singular.

$\Leftrightarrow$ $D=D^{T}$, and there exists a real matrix $P=P^{T}\geq0$, $P\in\mathbb{R}^{n\times n}$, such that
\begin{align*}
PA+A^TP&=0\\
PB-A^{T}C^{T}&=0\\
CB+(CB)^{T}&=0.
\end{align*}
This equivalence is according to Lemma \ref{lem:2}.

$\Leftrightarrow$ $D=D^{T}$, and there exists a real matrix $P=P^{T}\geq0$, $P\in\mathbb{R}^{n\times n}$, such that \eqref{eq:7} holds.
\end{proof}

\begin{remark}
In Theorem \ref{th:1}, the state-space realization $(A,B,C,D)$ is assumed to be minimal realization. In fact, if we remove the observability requirement of $(A,C)$, the results in Theorem \ref{th:1} also hold. Moreover, consider the generalized negative imaginary lemma in \cite[Lemma 2]{mabrok2015generalized}. Assume that $(A,B)$ is controllable and $(A,C)$ is not necessarily observable. The result in \cite[Lemma 2]{mabrok2015generalized} also hold by using Lemma 10 in \cite{liu2016non} and Lemma \ref{lem:1} in here.
\end{remark}
\begin{remark}
Compared to Theorem 1 in \cite{xiong2012lossless}, Theorem \ref{th:1} in this paper removes the non-singularity condition of state matrix $A$, that is, $\det(A)=0$ is allowed in this paper by allowing poles at zero, and $P$ is allowed to be positive semi-definite.
Compared to Lemma 2 in \cite{mabrok2015generalized}, the inequality in \cite{mabrok2015generalized} is modified as equality in this paper by applying to the case where the negative imaginary transfer matrix is lossless.
\end{remark}
\begin{example}
To illustrate the main results of the paper, consider the same example in Remark 4.
One minimal realization of $G(s)=\frac{1-2s^{4}}{s^{2}(s^2+1)}$ is as follows
\begin{align*}
A&=\left(
    \begin{array}{cccc}
      0 & -1 & 0 & 0 \\
      1 & 0 & 0 & 0 \\
      0 & 1 & 0 & 0 \\
      0 & 0 & 1 & 0 \\
    \end{array}
  \right),\quad B=\left(
              \begin{array}{c}
                1 \\
                0 \\
                0 \\
                0 \\
              \end{array}
            \right),\\
            C&=\left(
                \begin{array}{cccc}
                  0 & 2 & 0 & 1 \\
                \end{array}
              \right),\quad D=-2.
\end{align*}
$CB+(CB)^{T}=0$ holds.
YALMIP \cite{lofberg2004yalmip} and SeDuMi were used to find a
solution of \eqref{eq:7} as
\begin{equation*}
P=\left(
    \begin{array}{cccc}
      2 & 0 & 1 & 0 \\
      0 & 1 & 0 & 0 \\
      1 & 0 & 1 & 0 \\
      0 & 0 & 0 & 0 \\
    \end{array}
  \right)\geq0,
\end{equation*}
which implies that the conditions in Theorem \ref{th:1} hold. This verifies that $G(s)$ is lossless negative imaginary from state-space condition. Also, the lossless negative imaginary property of $G(s)$ can also be confirmed by directly using the definition of lossless negative imaginary systems.
\end{example}

%
\section{Conclusions}
\label{sec:conclusions}

This paper has studied some new and important lossless negative imaginary properties of square real-rational transfer matrices. A necessary and sufficient condition has been proposed to characterize the non-proper lossless negative imaginary systems. Then, a minor decomposition theory for lossless negative imaginary systems has been proposed in terms of a partial-fraction expansion. According to this minor decomposition theory,
two different relationships between lossless negative imaginary and lossless positive real transfer matrices have been studied by allowing poles at zero and infinity. Moreover, a generalized lossless negative imaginary lemma has been derived in terms of a minimal realization.

%
%
%
%


\end{document}